\numberwithin{equation}{section} 
\newtheorem{thm}{Theorem}[section]
\newtheorem{lem}[thm]{Lemma}
\newtheorem{prop}[thm]{Proposition}
\theoremstyle{definition}
\theoremstyle{remark}
\newcommand{\pr}{\mathbf{P}}
\newcommand{\E}{\mathbf{E}}
\newcommand{\R}{\mathbb{R}}
\newcommand{\Z}{\mathbb{Z}}
\title{On the growth of one-dimensional\\ reverse immunization contact processes} 
\date{}
\author{Achilleas Tzioufas\footnote{{\sc Heriot-Watt University.}}}
\begin{document} 

\maketitle

\begin{abstract}
We are concerned with the variation of the supercritical nearest neighbours contact process such that first infection occurs at a lower rate; it is known that the process survives with positive probability. Regarding the rightmost infected of the process started from one site infected and conditioned to survive, we specify a sequence of space-time points at which its behaviour regenerates and thus obtain the corresponding strong law and central limit theorem. We also extend complete convergence in this case. 
\end{abstract}

%\keywords{Contact processes; Kuczek's type argument} 

%\ams{60K35}{82C22}

\section{Introduction and main results}\label{S0}
We begin by defining a class of processes that includes the processes we are especially interested in. The  \textit{nearest neighbours three state contact process} with parameters $(\lambda,\mu)$ is a continuous time Markov process $\zeta_{t}$ with state space $\{-1,0,1\}^{\Z}$, elements of which are called configurations. 
The evolution of $\zeta_{t}$ is described locally as follows. Think of configurations as functions from $\Z$ to $\{-1,0,1\}$, transitions at each site $x$, $\zeta_{t}(x)$, occur according to the rules:
\begin{equation*}\label{rates}
\begin{array}{cl}
-1 \rightarrow 1 & \mbox{ at rate } \lambda \hspace{0.5mm} |\{ y = x-1,x+1 :\zeta_{t}(y)= 1\}|, \\
\mbox{ } 0 \rightarrow 1 & \mbox{ at rate } \mu \hspace{0.5mm} |\{ y = x-1,x+1 :\zeta_{t}(y)= 1\}|, \\   
\mbox{ } 1 \rightarrow 0 & \mbox{ at rate } 1,
\end{array}
\end{equation*}
for all times $t \geq0$, where $|B|$ denotes the cardinal of $B\subset \Z$.   Typically, the process started from configuration $\eta$ is denoted as $\zeta_{t}^{\eta}$. For general information about interacting particle systems, such as the fact that the above rates specify a well-defined process, we refer to Liggett \cite{L}. We note that the cases $\lambda = \mu$ and $\mu=0$ correspond to the extensively studied processes known as the contact process and as the forest fire model respectively, see $\mbox{e.g.}$ $\cite{L99}$, $\cite{D88}$. Furthermore in the literature various survival aspects of the three state contact process on the $d$-dimensional lattice were studied by Durrett and Schinazi \cite{DS} and by Stacey \cite{S}, the latter also includes results for the process on homogeneous trees. 

The process is thought of according to the following epidemiological interpretation. Given a configuration $\zeta$, each site  $x$  is regarded as infected if $\zeta(x) = 1$, as susceptible and never infected if $\zeta(x) = -1$ and, as susceptible and previously infected if $\zeta(x) = 0$. The \textit{standard initial configuration} is such that the origin is infected while all other sites are susceptible and never infected. We will use  $\zeta_{t}^{O}$ to denote the nearest neighbours three state contact process process started from the standard initial configuration. 
We say that the three state contact process \textit{survives} if $\pr(\zeta_{t}^{O} \mbox{ survives})>0$, where the event $\{\forall \hspace{0.5mm}t\geq0, \exists  \hspace{0.5mm} x: \zeta_{t}(x) =1\}$ is abbreviated as $\{\zeta_{t} \mbox{ survives}\}$. 

For $\zeta_{t}^{O}$ we have that transitions $-1 \rightarrow 1$,  $ 0 \rightarrow 1$, and $1 \rightarrow 0$ correspond respectively to initial infections,  subsequent infections and recoveries.  Accordingly, the initial infection of a site
induces a permanent alternation of the parameter proportional to which it will be susceptible; hence the parameter either decreases, corresponding to (partial) immunization, or increases, i.e.\ the reverse occurs. Our results concern the three state contact process under the constraint that $\mu\geq\lambda$, this explains the title of the article.
When modelling an epidemic,  the case that $\mu \leq \lambda$ could be a consequence of imperfect inoculation of individuals following their first exposure to the disease, while the case that $\mu \geq \lambda$ could be a consequence of debilitation of individuals caused by their first exposure to the disease. Specifically, tuberculosis and bronchitis are plausible examples of a disease that captures the latter characteristic.

When $(\lambda,\mu)$ are such that $\lambda = \mu$ the process is reduced to the well known contact process. In this case we will identify a configuration with the subset of $\Z$ that corresponds to the set of its infected sites, since states $-1$ and $0$ are effectively equivalent. Also, it is well known that the contact process exhibits a phase transition phenomenon, $\mu_{c}$ will denote its (one-dimensional nearest neighbours) critical value, i.e., $0<\mu_{c}<\infty$ and, if $\mu< \mu_{c}$ the process dies out while if $\mu> \mu_{c}$ the process survives, 
for an account of various related results and proofs see \cite{L}, \cite{D88} and \cite{L99}.

It is known that the three state contact process with parameters $(\lambda,\mu)$ such that $\mu > \mu_{c}$ and $\lambda>0$ survives, see \cite{DS}. We are concerned with the behaviour of the process when survival occurs 
assuming additionally that $\mu\geq\lambda$. The following theorem summarizes the main results of this paper, 
in words, parts \textit{(i)} and \textit{(ii)}  are respectively a law of large numbers and the corresponding central limit theorem for the rightmost infected while parts \textit{(iii)} and \textit{(iv)} are respectively a law of large numbers and complete convergence for the set of infected sites of the process. For demonstrating our results we introduce some notation.  The standard normal distribution function is represented by $N(0,\sigma^{2}), \sigma^{2}>0,$ also, weak convergence of random variables and of set valued processes are denoted by "$\overset{w}{\rightarrow}$" and by "$\Rightarrow$" respectively. Further, we denote by $\bar{\nu}_{\mu}$ the upper invariant measure of the contact process with parameter $\mu$,  and by $\delta_{\emptyset}$ the probability measure that puts all mass on the empty set. (For general information about the upper invariant measure and weak convergence of set valued processes we refer to Liggett \cite{L99}).

\begin{thm}\label{THEthm} 
Consider $\zeta_{t}^{O}$ with parameters $(\lambda, \mu)$, and let $I_{t}= \{x: \zeta_{t}^{O}(x)=1\}$ and $r_{t} = \sup I_{t}$. If $(\lambda, \mu)$ are such that $\mu \geq \lambda>0$ and $\mu > \mu_{c}$ then there exists $\alpha>0$ such that conditional on $\{\zeta_{t}^{O} \mbox{\textup{ survives}}\}$,

(i) $\displaystyle{  \frac{r_{t}}{t} \rightarrow \alpha}$, almost surely;

(ii) $\displaystyle{ \frac{r_{t} - \alpha t}{\sqrt{t}} \overset{w}{\rightarrow} N(0,\sigma^{2})}$, for some $\sigma^{2}>0$;

(iii) let $\theta= \theta(\mu)$ be the density of $\bar{\nu}_{\mu}$, then, $\displaystyle{ \frac{|I_{t}|}{t} \rightarrow 2 \alpha \theta}$, almost surely. 

(iv) Let $\beta= \pr(\zeta_{t}^{O} \mbox{ survives})>0$, then, $\displaystyle{ I_{t} \Rightarrow (1-\beta) \delta_{\emptyset} + \beta \bar{\nu}_{\mu}}$.
\end{thm}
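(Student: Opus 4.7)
All four parts hinge on identifying a regeneration structure for the rightmost infected $r_t$ on the survival event and then bootstrapping. I would work throughout with the Harris graphical representation built from independent Poisson processes with rates $\lambda,\mu,1$ attached to the appropriate directed edges, so that any initial configuration can be evolved on the same probability space.

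For parts (i) and (ii) I would follow the break point approach of Kuczek as adapted to the contact process by Durrett and Griffeath: construct an increasing sequence of space-time points $(X_n,T_n)$ with $X_n=r_{T_n}$ such that the infection at $(X_n,T_n)$ has an infinite descendant path whose rightmost trajectory never dips below $X_n$, and such that no site strictly to the right of $X_n$ is infected except through descendants of $(X_n,T_n)$. The decisive simplification afforded by the three-state setting is that at time $T_n$ every site strictly to the right of $X_n$ is in state $-1$ (never previously infected), so the subsequent evolution of the leader depends only on the graphical representation in the right half-space after $T_n$. This yields the strong Markov structure that makes the increments $(X_{n+1}-X_n,\,T_{n+1}-T_n)$ i.i.d.\ under the conditional law on survival. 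Exponential tails on these increments come from a standard restart argument combined with a uniform lower bound, obtained on a single finite block of the graphical representation, for the probability of immediately realizing a break point; here survival of $\zeta_t^O$ from \cite{DS} is the essential input. The strong law then follows from the renewal reward theorem with $\alpha=\E[X_2-X_1]/\E[T_2-T_1]>0$, and the central limit theorem follows by the usual Anscombe argument applied to the i.i.d.\ increments.

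For part (iii), spatial symmetry gives $\ell_t/t\to-\alpha$ almost surely on survival, where $\ell_t=\inf I_t$, so $r_t-\ell_t\sim 2\alpha t$. On the set of sites which have been infected at some point by time $t$, the dynamics of $\zeta_t^O$ use only the rates $\mu$ and $1$, hence coincide with those of the $\mu$-contact process. A coupling on the Harris representation together with the ergodic theorem for the supercritical one-dimensional contact process gives that the empirical density in any window well inside $[\ell_t,r_t]$ converges to $\theta$; a Cesaro/sandwich argument using the linear growth of $r_t$ and $\ell_t$ then yields $|I_t|/t\to 2\alpha\theta$ on survival. For part (iv), on extinction $I_t=\emptyset$ for all large $t$, contributing the $(1-\beta)\delta_{\emptyset}$ component. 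On survival, I would adapt the Bezuidenhout-Grimmett framework: use the regeneration of (i) and its leftward analogue to produce, for any fixed finite window and any large time, two wide bands of infected sites bracketing the window; then couple the restriction of $\zeta_t^O$ to this region with a $\mu$-contact process started from these bands, using the identification of dynamics on previously infected sites; finally invoke convergence of the supercritical contact process started from sufficiently large initial configurations to $\bar\nu_\mu$.

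The main obstacle is the exponential tail estimate in the break point construction. The three-state contact process is not attractive in the natural partial order on $\{-1,0,1\}^{\Z}$, so the usual monotonicity-based restart arguments are unavailable when comparing the post-regeneration process with a fresh copy. I plan to sidestep this by exploiting the one-sided structure: to the right of the current leader the configuration is identically $-1$, so the event that a break point is realized within a given time depends only on a fresh portion of the Harris construction in a finite box. A single Bezuidenhout-Grimmett style block estimate for a right-contact-type process with first-infection rate $\lambda$ and subsequent-infection rate $\mu$ (whose supercriticality follows from $\mu>\mu_c$ and $\lambda>0$) then delivers the required uniform lower bound independently of the past, completing the renewal argument.
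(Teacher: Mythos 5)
Your overall route is the same as the paper's: Kuczek-type break points for $r_t$ exploiting the nearest-neighbour, one-sided structure for parts (i)--(ii), and the coupling of $I_t$ with the $\mu$-contact process started from $\Z$ together with Durrett--Griffeath/Griffeath arguments for parts (iii)--(iv). The genuine gap is in the step you yourself flag as the main obstacle. Your premise that monotonicity is unavailable is wrong precisely in the regime of the theorem: for $\mu\ge\lambda$ the three-state process \emph{is} monotone in the initial configuration under the graphical coupling (Stacey; Theorem \ref{moninit1}), and the paper's construction relies on this repeatedly --- to dominate the restarted single-seed processes by the running process (Lemma \ref{Sk}), to identify the first break point as the infimum over candidate sites (equation (\ref{Yinf})), and to adapt Liggett's Theorem 2.30(a) and get $\pr(t<\rho<\infty)\le Ce^{-\gamma t}$ (Proposition \ref{expbounds3scp}). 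Your proposed substitute does not stand as stated: being a break point means that the process restarted from a single seed (all sites to its right in state $-1$) survives \emph{for all time}, a tail event not measurable with respect to any finite block of the Harris construction; and the right edge's subsequent evolution is not a function of the right half-space construction alone, since the edge can retreat into previously infected territory --- the correct statement is the right-edge coupling $r_t=r_t'$ of Lemma \ref{piprendcoup1}, whose proof again uses $\mu\ge\lambda$ and the nearest-neighbour assumption.

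Even granting a uniform lower bound on the success probability of a single attempt, the restart argument is not complete: you also need exponential control of the duration and the spatial overshoot of a failed attempt (the joint bound on $R$ and $\rho$ in Proposition \ref{expbounds3scp}), a linear lower bound on the speed at which the surviving configuration's right edge reaches the next candidate site (the half-line estimate of Proposition \ref{shadldev2}, obtained from the Durrett--Schinazi renormalization, Proposition \ref{couplDS}), and an argument for the i.i.d.\ structure that accounts for the fact that the break-point times look into the future and are not stopping times (the conditioning bookkeeping of Lemmas \ref{bptsinf}, \ref{KeqK'} and \ref{Xbdownii}). None of these is delivered by a single finite-block estimate. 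The repair is simply to use the monotonicity you discarded; with it, your sketch essentially becomes the paper's proof. As a minor remark on (iv), the bracketing-bands construction you outline can be replaced by the simpler consequence of Lemma \ref{cccoup}: $\{\xi_t^{\Z}\cap F=\emptyset,\ \rho\ge t\}\subseteq\{I_t\cap F=\emptyset,\ \rho\ge t\}$, and the difference of these events has vanishing probability because $r_t\to\infty$ and $l_t\to-\infty$ almost surely on survival.
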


We comment on the proof of Theorem \ref{THEthm}. The cornerstone for acquiring parts  \textit{(i)\textup{ and} (ii)} is to ascertain the existence of a sequence of space-time points, termed \textit{break points}, strictly increasing in both space and time, among which the behaviour of $r_{t}$ conditional on $\{\zeta_{t}^{O} \mbox{\textup{ survives}}\}$ stochastically replicates; these type of arguments have been established by Kuczek, see \cite{K}. We also note that proofs of parts \textit{(iii) \textup{and} (iv)} are based on variations of the arguments for the contact process case due to Durrett and Griffeath, see \cite{DG} and \cite{D80}, \cite{G}.

In the next section we introduce the graphical construction, we also present monotonicity and give some elementary coupling results. Section \ref{Sexp} is intended for the proof of two exponential estimates that we need for latter. Section \ref{S3} is devoted to the study of break points and the proof of Theorem \ref{THEthm}.

\section{Preliminaries}\label{S2}

\subsection{The graphical construction}\label{grrep}
The graphical construction will be used in order to visualize the construction of various processes on the same probability space; we will repeatedly use it throughout this paper.

Consider parameters $(\lambda,\mu)$ and suppose that $\mu \geq \lambda$, the other case is similar.  To carry out our construction for all sites $x$ and $y=x-1,x+1$, let $(T_{n}^{x,y})_{n\geq1}$ and $(U_{n}^{x,y})_{n\geq1}$ be the event times of Poisson processes respectively at rates $\lambda$ and $\mu-\lambda$; further, let $(S_{n}^{x})_{n\geq 1}$ be the event times of a Poisson process at rate $1$. (All Poisson processes introduced are independent). 

Consider the space $\Z \times [0,\infty)$ thought of as giving a time line to each site of $\Z$; Cartesian product is denoted by $\times$. Given a realization of the before-mentioned ensemble of Poisson processes, we define the \textit{graphical construction} and $\zeta_{t}^{[\eta,s]}$,  $t\geq s$, the nearest neighbours three state contact process with parameters $(\lambda,\mu)$ started from $\eta$ at time $s\geq0$, i.e.\ $\zeta_{s}^{[\eta,s]} = \eta$, as follows. At each point $x \times T_{n}^{x,y}$ we place a directed $\lambda$-\textit{arrow} to $y \times T_{n}^{x,y}$; this indicates that at all times $t=T_{n}^{x,y}$, $t\geq s$, if $\zeta_{t-}^{[\eta,s]}(x)=1$ and $\zeta_{t-}^{[\eta,s]}(y)=0$ \textit{or} $\zeta_{t-}^{[\eta,s]}(y) = -1$ then we set $\zeta_{t}^{[\eta,s]}(y) = 1$ (where $\zeta_{t-}(x)$ denotes the limit of $\zeta_{t-\epsilon}(x)$ as $\epsilon\rightarrow 0$). At each point $x \times U_{n}^{x,y}$ we place a directed $(\mu-\lambda)$-\textit{arrow} to $y \times U_{n}^{x,y}$; this indicates that at any time $t=U_{n}^{x,y}$, $t\geq s$, if $\zeta_{t-}^{[\eta,s]}(x)=1$ and $\zeta_{t-}^{[\eta,s]}(y)=0$ then we set $\zeta_{t}^{[\eta,s]}(y) = 1$. While at each point $x \times S_{n}^{x}$ we place a \textit{recovery mark}; this indicates that at any time $t= S_{n}^{x}, t\geq s,$ if $\zeta_{t-}^{[\eta,s]}(x)=1$ then we set $\zeta_{t}^{[\eta,s]}(x) = 0$.  The reason we introduced the special marks is to make connection with percolation and hence the contact process, we define  the contact process $\xi_{t}^{A}$  with parameter $\mu$ started from $A \subset \Z$ as follows. We write $A \times 0 \rightarrow B \times t$, $t\geq 0$, if there exists a connected oriented path from $x \times 0$ to $y \times t$, for some $x \in A$ and $y \in B$, that moves along arrows (of either type) in the direction of the arrow and along time lines in increasing time direction without passing through a recovery mark, defining $\xi_{t}^{A} := \{x: A \times 0 \rightarrow x \times t\}$, $t\geq0$, we have that $(\xi_{t}^{A})$ is a set valued version of the contact process with parameter $\mu$ started from $A$ infected.

It is important to emphasize that the graphical construction, for fixed $(\lambda,\mu)$, defines all  $\zeta_{t}^{[\eta,s]}$,  $t\geq s$, for any configuration $\eta$ and time $s\geq0$, and all $\xi_{t}^{A}$, for any $A \subset \Z$, simultaneously on the same probability space, i.e.\ provides a coupling of all these processes.

\definition{We shall denote by $\mathcal{I}(\zeta)$ the set of infected sites of any given configuration $\zeta$, i.e. $\mathcal{I}(\zeta) = \{y \in \Z:\zeta(y) =1\}$.}\label{calI}

To simplify our notation, consistently to Section \ref{S0}, $\zeta_{t}^{[\eta,0]}$ is denoted as $\zeta_{t}^{\eta}$, and, letting $\eta_{0}$ be the standard initial configuration, $\zeta_{t}^{[\eta_{0},0]}$ is denoted as $\zeta_{t}^{O}$. Additionally, the event $\{\mathcal{I}(\zeta_{t}^{[\eta,s]}) \not= \emptyset  \mbox{ for all } t\geq s\}$ will be abbreviated below as  $\{\zeta_{t}^{[\eta,s]} \mbox{ survives}\}$. 

Finally, we note that we have produced a version of $\zeta_{t}^{\eta}$ via a countable collection of Poisson processes, this provides well-definedness of the process. Indeed, whenever one assumes that $|\mathcal{I}(\eta)|<\infty$, this is a consequence of standard Markov chains results having an almost surely countable state space; otherwise, this is provided by an argument due to Harris \cite{H}, see Theorem 2.1 in Durrett \cite{D95}.

\subsection{Monotonicity, coupling results}\label{coupling}

To introduce monotonicity concepts, we endow the space of configurations $\{-1,0,1\}^{\Z}$ with the \textit{component-wise partial order}, $\mbox{i.e.}$, for any two configurations $\eta_{1}, \eta_{2}$ we have that $\eta_{1} \leq \eta_{2}$ whenever $\eta_{1}(x) \leq \eta_{2}(x)$ for all $x \in \Z$. The following theorem is a known result, for a proof we refer to section 5 in Stacey \cite{S}.

\begin{thm}\label{moninit1}
Let $\eta$ and $\eta'$ be any two configurations such that $\eta\leq \eta'$. Consider the respective three state contact processes $\zeta_{t}^{\eta}$ and $\zeta_{t}^{\eta'}$ with the same parameters $(\lambda,\mu)$ coupled by the graphical construction. For all $(\lambda,\mu)$ such that $\mu \geq \lambda>0$, we have that $\zeta_{t}^{\eta} \leq \zeta_{t}^{\eta'}$ holds. We refer to this property as monotonicity in the initial configuration.
\end{thm}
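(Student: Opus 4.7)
The plan is to reduce the theorem to a single-event case analysis along the graphical construction. Fix a finite $F\subset\Z$ and a horizon $T>0$; only finitely many Poisson events ($\lambda$-arrows, $(\mu-\lambda)$-arrows, or recovery marks) with at least one endpoint in $F$ occur in $[0,T]$, so between consecutive such events both $\zeta_{t}^{\eta}$ and $\zeta_{t}^{\eta'}$ are constant on $F$. Monotonicity at $t=0$ is the hypothesis $\eta\leq\eta'$, so it suffices to verify that each individual event preserves the componentwise order $\zeta_{t-}^{\eta}\leq\zeta_{t-}^{\eta'}$ pointwise; taking $F$ and $T$ arbitrary then yields the conclusion for all $x$ and all $t\geq 0$.

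Assume $\zeta_{t-}^{\eta}\leq\zeta_{t-}^{\eta'}$ and consider the three possible event types at time $t$. A recovery mark at $x$ applies the map sending $1\mapsto 0$ and fixing $\{-1,0\}$, which is monotone on $\{-1,0,1\}$, so the order at $x$ survives. A $\lambda$-arrow from $y$ to $x$ can fire in the $\eta$-process only if $\zeta_{t-}^{\eta}(y)=1$, which by the induction hypothesis forces $\zeta_{t-}^{\eta'}(y)=1$; hence either the arrow also fires in the $\eta'$-process or $\zeta_{t-}^{\eta'}(x)=1$ already, and in either case $\zeta_{t}^{\eta}(x)\leq\zeta_{t}^{\eta'}(x)$. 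A $(\mu-\lambda)$-arrow from $y$ to $x$ promotes $\zeta(x)=0$ to $1$ when $\zeta(y)=1$; enumerating the pairs $(\zeta_{t-}^{\eta}(x),\zeta_{t-}^{\eta'}(x))$ and $(\zeta_{t-}^{\eta}(y),\zeta_{t-}^{\eta'}(y))$ compatible with the order shows that every resulting pair at $x$ lies in one of $(1,1)$, $(-1,1)$, $(0,1)$, all of which respect $-1<0<1$.

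The genuine subtlety, and the main obstacle, is the $(\mu-\lambda)$-arrow: as a single-site update ``$0\mapsto 1$, others fixed'' it is \emph{not} monotone on $\{-1,0,1\}$, since it raises $0$ but leaves the strictly smaller state $-1$ unchanged. Monotonicity for this mark is rescued not by the local rule but by the width of the state space: when the rule fires in the $\eta'$-process at a site where the $\eta$-process sits at $-1$, the resulting pair $(-1,1)$ still respects the order, and the symmetric scenario (where it would fire in $\eta$ but not in $\eta'$) forces $\zeta_{t-}^{\eta'}(x)=1$ already by the order hypothesis. The assumption $\mu\geq\lambda$ is used only to guarantee that $\mu-\lambda\geq 0$ is a legitimate Poisson rate, so that the decomposition of the arrows into the two families $(T_{n}^{x,y})$ and $(U_{n}^{x,y})$ is available; once this decomposition is in place, the monotonicity is intrinsic to the ordering of $\{-1,0,1\}$ and the case analysis above.
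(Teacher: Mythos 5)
Your case analysis is correct, and it amounts to a self-contained verification that the specific graphical coupling used throughout the paper preserves the componentwise order. This is a genuinely different route from the paper, which does not prove Theorem \ref{moninit1} at all but refers to Section 5 of Stacey \cite{S}: your argument buys a short proof tied to exactly the coupling that the later lemmas (e.g.\ Lemmas \ref{piprendcoup1} and \ref{Sk}) rely on, while the citation buys the generality of Stacey's systematic treatment of when such partial immunization processes are monotone. The heart of your proof --- that recovery marks are monotone single-site maps, that $\lambda$-arrows can only push the lower process to $1$ when the upper process's source is already $1$ and its target is $\geq$ the lower one, and that the rate-$(\mu-\lambda)$ arrows are harmless because they act on the larger susceptible state $0$ --- is exactly the right content.

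Two points deserve tightening. First, the reduction to finitely many events is misphrased: an arrow whose head lies in $F$ may have its tail outside $F$, and your check at that event needs the order at the tail site at time $t-$, so an induction over events ``with an endpoint in $F$'' does not close on $F$ alone. The standard repair is Harris's finite-propagation argument: almost surely only finitely many sites and Poisson events can influence $F\times[0,T]$, so one inducts over the a.s.\ finite, linearly ordered set of events in this causal cone, applying your case analysis verbatim at each step; this is the same device the paper already invokes for well-definedness (Theorem 2.1 of Durrett \cite{D95}, after Harris \cite{H}), and it matches the level of rigour of the paper's own event-by-event proof of Lemma \ref{piprendcoup1}. Second, saying that $\mu\geq\lambda$ is used ``only'' to make $\mu-\lambda$ a nonnegative rate undersells what your own third paragraph shows: the inequality decides which susceptible state receives the residual arrows, and the argument works precisely because that state is $0$, the larger one. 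In the regime $\lambda>\mu$ the mirrored construction (rate-$\mu$ arrows acting on both susceptible states, rate-$(\lambda-\mu)$ arrows acting on $-1$ only) is equally available, yet the pair $(\zeta^{\eta}_{t-}(x),\zeta^{\eta'}_{t-}(x))=(-1,0)$ with an infected source then produces $(1,0)$ and the order breaks; this is consistent with the hypothesis $\mu\geq\lambda$ in the statement and with the discussion in \cite{S}.
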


For the remainder of this subsection we give various coupling results concenring $\zeta_{t}^{O}$ the nearest neighbours three state contact process with parameters $(\lambda,\mu)$ started from the standard initial configuration, let $I_{t} = \mathcal{I}(\zeta_{t}^{O})$, $r_{t}= \sup I_{t}$ and $l_{t}= \inf I_{t}$.

We note that both the nearest neighbours assumption as well as the assumption that $\mu \geq \lambda$ in all three of the proofs in this subsection is crucial.

The next lemma will be used repeatedly throughout this paper, its proof given below is a simple extension of a well known result for the nearest neighbours contact process on $\Z$, see  $\mbox{e.g.}$ $\cite{D80}$. 

\begin{lem}\label{piprendcoup1}
Let $\eta'$ be any configuration such that $\eta'(0)= 1$ and $\eta'(x)= -1$ for all $x\geq1$. Consider   $\zeta_{t}^{\eta'}$ with parameters $(\lambda,\mu)$ and let $r_{t}'= \sup\mathcal{I}(\zeta_{t}^{\eta'})$. For  $(\lambda,\mu)$ such that $\mu \geq \lambda$, if $\zeta_{t}^{O}$ and $\zeta_{t}^{\eta'}$ are coupled by the graphical construction then the following property holds, for all $t\geq 0$,
\begin{equation*}\label{couprend2}
r_{t} = r_{t}'  \mbox{ on } \{ I_{t} \not= \emptyset\}.
\end{equation*} 
\end{lem}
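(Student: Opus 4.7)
The plan is to show $r_t \leq r_t'$ and $r_t \geq r_t'$ separately on the event $\{I_t \neq \emptyset\}$.

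For $r_t \leq r_t'$, I would invoke Theorem \ref{moninit1} directly. Since $\eta_0$ and $\eta'$ agree on $\{x \geq 0\}$ and $\eta_0(x) = -1 \leq \eta'(x)$ for $x \leq -1$, we have $\eta_0 \leq \eta'$ in the component-wise order; the coupling under the graphical construction then gives $\zeta_t^O \leq \zeta_t^{\eta'}$ pointwise, so $I_t \subseteq \mathcal{I}(\zeta_t^{\eta'})$ and taking suprema yields $r_t \leq r_t'$.

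For the reverse, I would proceed by induction on the finitely many Poisson events in $[0,t]$, carrying the invariant ``$r_s = r_s'$ on $\{I_s \neq \emptyset\}$'' for every earlier $s$. Let $t_n$ be an event time with $r_{t_n^-} = r_{t_n^-}' = R$ and $I_{t_n^-} \neq \emptyset$. The only events that could split the two rightmost positions at $t_n$ are an arrow from $R$ to $R+1$ that advances $r'$ alone, or a recovery at $R$ that leaves the next-largest infected sites disagreeing. A $\lambda$-arrow from $R$ to $R+1$ automatically advances both processes since $R+1$ is in state $-1$ or $0$ in each. The delicate advance case is a $(\mu{-}\lambda)$-arrow with $\zeta_{t_n^-}^{\eta'}(R+1) = 0$ but $\zeta_{t_n^-}^O(R+1) = -1$; here $\zeta^{\eta'}$-state $0$ at $R+1$ forces $R+1$ to have been infected in $\zeta^{\eta'}$ at some earlier time $s$, so by the inductive invariant $r_s \geq R+1$, and the nearest-neighbour principle that the right edge can only advance by one (via a $\lambda$-arrow from the immediate left neighbour, ruling out infinite descent from the right) then forces $R+1$ to have been infected in $\zeta^O$ too, contradicting $\zeta_{t_n^-}^O(R+1) = -1$.

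For a recovery at $R$, suppose the new rightmosts satisfy $r_{t_n} < y = r_{t_n}' < R$ with, say, $y \geq 0$ infected in $\zeta^{\eta'}$ but not in $\zeta^O$ at $t_n$. The same nearest-neighbour descent along the trajectory of $r^O$ up to $R$ forces $\zeta_{t_n^-}^O(y) = 0$. I would then backtrack the most recent transition-to-infected of $y$ in $\zeta^{\eta'}$ to an arrow from a neighbour $z$: if $z$ is infected in both processes at the arrow time, the shared recovery marks combined with $y$ having to remain infected in $\zeta^{\eta'}$ throughout the interval up to $t_n$ produce a direct contradiction; if $z$ is infected only in $\zeta^{\eta'}$, then $z$ itself exhibits the same kind of divergence at an earlier event time, and one iterates. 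By local finiteness of the Poisson events the iteration terminates in finitely many steps, and the chain of divergent sites traces a spatial walk of $\pm 1$ steps whose only admissible termination is at a site $x \leq -1$ at time $0$ with $\eta'(x) = 1 \neq \eta_0(x)$; the walk must cross the origin, and analysing the crossing together with the standing hypothesis $I_{t_n^-} \neq \emptyset$ yields the desired contradiction. The complementary case $y < 0$ (so $r_{t_n} < 0$) is handled by the symmetric argument from the left.

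The principal obstacle I foresee is exactly the recovery case, where the iterative divergence-chain argument needs careful bookkeeping: both the coupling-shared recovery marks and the survival hypothesis $I_t \neq \emptyset$ must be used simultaneously to preclude a consistent termination of the chain on the negative half-line. This is the point where the nearest-neighbour structure and the assumption $\mu \geq \lambda$ are both essential, corresponding to the ``simple extension'' of the contact-process argument of \cite{D80} alluded to in the paper.
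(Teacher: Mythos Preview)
The first inequality $r_t\le r_t'$ via monotonicity is fine, and your treatment of the forward--arrow case in the induction is correct. The gap is the recovery case. The inductive invariant you carry --- only $r_s=r_s'$ for every earlier $s$ --- is too weak to close the divergence--chain argument you sketch. Concretely: tracing the chain $(y_i,s_i)$ of sites infected in $\zeta^{\eta'}$ but not in $\zeta^{O}$ back to some $x\le -1$ at time $0$ yields, at each step, only $r_{s_i}=r'_{s_i}\ge y_i$, hence $r_{s_i}>y_i$ (since $y_i\notin I_{s_i}$). That is perfectly consistent: the chain lives strictly to the left of the right edge throughout, and nothing in the hypothesis $I_{t_n^-}\ne\emptyset$ or in the crossing of the origin produces a contradiction. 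Your final clause (``analysing the crossing \dots\ yields the desired contradiction'') is precisely where the real work would have to happen, and it is not carried out; I do not see how to complete it using only the weak invariant. The appeal to a ``symmetric argument from the left'' for $y<0$ is also unjustified, since $\eta'$ is not left--right symmetric.

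The paper sidesteps all of this by proving a \emph{stronger} invariant: on $\{I_t\ne\emptyset\}$ one has $\zeta_t^{O}(x)=\zeta_t^{\eta'}(x)$ for every $x\ge l_t$, where $l_t=\inf I_t$. With this invariant the recovery case becomes trivial (the two configurations agree at and to the right of the recovery site, so the new rightmost positions coincide automatically), and the only transition requiring work is a \emph{decrease} of $l_t$. There one lists, via monotonicity, the possible pairs $\bigl(\zeta_t^{O}(l_t-1),\zeta_t^{\eta'}(l_t-1)\bigr)\in\{(-1,0),(-1,1),(0,0),(0,1)\}$ and observes that a $\lambda$-arrow infects a site in state $-1$ or $0$ alike; this is exactly where $\mu\ge\lambda$ enters. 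The whole argument is then a few lines. The moral is to choose an invariant that records agreement on the entire spatial interval $[l_t,\infty)$, rather than only at the single site $r_t$.
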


\begin{proof}
We prove the following stronger statement, for all $t\geq 0$, 
\begin{equation}\label{couprend1}
\zeta_{t}^{O}(x) = \zeta_{t}^{\eta'}(x)   	\mbox{ for all } x \geq l_{t}, \mbox{ on }  \{I_{t} \not= \emptyset\}.
\end{equation} 
Clearly (\ref{couprend1}) holds for $t=0$, we show that all possible transitions preserve it.  An increase of $l_{t}$ (i.e., a recovery mark at $l_{t}\times t$) as well as any transition changing the state of any site $x$ such that $ x\geq l_{t}+1$  preserve $(\ref{couprend1})$.  It remains to examine transitions that decrease $l_{t}$, by monotonicity in the initial configuration we have that the possible pairs of $(\zeta_{t}^{O}(l_{t}-1), \zeta_{t}^{\eta'}(l_{t}-1))$ are the following $(-1,0),(-1,1),(0,0),(0,1)$. In the first pair case $(\ref{couprend1})$ is preserved because $\lambda$-arrows are used for transitions $-1 \rightarrow 1$ as well as $ 0 \rightarrow 1$, while in the three remaining cases this is obvious, the proof of $(\ref{couprend1})$ is thus complete. 
\end{proof}

The next lemma will be used in the proof of the two final parts of Theorem $\ref{THEthm}$, its proof is a simple variant of that of Lemma $\ref{piprendcoup1}$ and thus is omitted.

\begin{lem}\label{cccoup}
Let $\xi_{t}^{\Z}$ be the nearest neighbours contact process with parameter $\mu$ started from $\Z$. For $(\lambda,\mu)$ such that $\mu \geq \lambda>0$, if $\zeta_{t}^{O}$ and $\xi_{t}^{\Z}$ are coupled by the graphical construction the following property holds, for all $t\geq 0$,
\begin{equation*}
I_{t} =  \xi_{t}^{\Z} \cap [l_{t},r_{t}] \mbox{ \textup{on} } \{I_{t} \not= \emptyset\}.
\end{equation*}
\end{lem}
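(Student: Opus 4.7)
Following the template of Lemma \ref{piprendcoup1}, the plan is to prove, by induction on the Poisson events of the graphical construction, the stronger invariant that on $\{I_t \neq \emptyset\}$ one has, for every $x \in [l_t, r_t]$,
\begin{equation*}
\zeta_t^O(x) \in \{0,1\} \quad \text{and} \quad \zeta_t^O(x) = 1 \iff x \in \xi_t^{\Z}.
\end{equation*}
Once established this immediately yields $I_t = \xi_t^{\Z} \cap [l_t, r_t]$, since $I_t \subseteq [l_t, r_t]$ by definition.

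At $t=0$ the invariant holds trivially: $[l_0,r_0] = \{0\}$, $\zeta_0^O(0) = 1$, and $0 \in \xi_0^{\Z} = \Z$. For the inductive step I would classify each graphical event by its position relative to $[l_t, r_t]$. When a $\lambda$-arrow, $(\mu-\lambda)$-arrow, or recovery mark acts strictly inside the interval, the induction hypothesis that every state there lies in $\{0,1\}$ collapses the three state dynamics onto the contact process dynamics: both types of arrows turn a receiving $0$ into $1$ precisely when the source is in state $1$, which by the invariant happens iff the source lies in $\xi_t^{\Z}$, while recovery marks simultaneously flip $1 \to 0$ in $\zeta^O$ and eject the site from $\xi_t^{\Z}$. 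Hence the equivalence is preserved and no state in the interval ever becomes $-1$.

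The boundary cases are the crux. An arrow from $r_t$ to $r_t+1$ (or symmetrically from $l_t$ to $l_t-1$) that fires in $\zeta^O$ extends the interval; since the invariant places $r_t \in \xi_t^{\Z}$, the same arrow places $r_t+1$ in $\xi_t^{\Z}$, so the new endpoint inherits the required equality. Conversely, an arrow targeting the interval from outside must, by nearest-neighbour geometry, land on $l_t$ or $r_t$; those endpoints already satisfy $\zeta_t^O = 1$ and so already belong to $\xi_t^{\Z}$, making the arrow redundant on the interval regardless of whether its source lies in $\xi_t^{\Z}$. A recovery at $l_t$ or $r_t$ merely shrinks the interval, on which the invariant continues to hold; a recovery that empties $I_t$ removes us from the event of interest. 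The only real obstacle to watch is that infections carried by $\xi_t^{\Z}$ outside $[l_t, r_t]$ not leak into the interior of the interval, which is prevented precisely by the nearest-neighbour structure together with $\zeta_t^O(l_t) = \zeta_t^O(r_t) = 1$; this is the same bookkeeping that drove the proof of Lemma \ref{piprendcoup1}.
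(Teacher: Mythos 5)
Your proof is correct and matches the paper's intent: the paper omits the argument, stating only that it is a simple variant of the proof of Lemma \ref{piprendcoup1}, and your transition-by-transition verification of the strengthened invariant (no $-1$ states in $[l_t,r_t]$ and $\zeta_t^O(x)=1 \iff x\in\xi_t^{\Z}$ there) is precisely that variant, with the endpoint/nearest-neighbour bookkeeping handled as in the paper's Lemma \ref{piprendcoup1}.
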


\definition{ For all integer $k$, let $\eta_{k}$ be the configuration such that $\eta_{k}(k) = 1$ and $\eta_{k}(y) =-1$ for all $y \not= k$.}\label{defetak}

Our final coupling result will be used in the definition of break points in Subsection $\ref{S31}$. 
To state the lemma, define the stopping times $\tau_{k} = \inf\{t: r_{t}=k\}$, $k\geq1$, and also $R = \sup_{t\geq0}r_{t}$.

\begin{lem}\label{Sk}
Let $(\lambda,\mu)$ be such that $\mu \geq \lambda>0$ and consider the graphical construction. Consider also the processes $\zeta_{t}^{[\eta_{k},\tau_{k}]}$, $k\geq1$, started at times $\tau_{k}$ from $\eta_{k}$, as in Definition \ref{defetak}. Then, for all $\mbox{ }k=1,\dots,R$ the following property holds, 
\begin{equation*}\label{eqSk}
\zeta_{t}^{O} \geq \zeta_{t}^{[\eta_{k}, \tau_{k}]}, \mbox{ for all } t \geq \tau_{k}.
\end{equation*}
\end{lem}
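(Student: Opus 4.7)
The plan is to verify the desired inequality at the (random) time $\tau_k$ pointwise in each site, and then invoke monotonicity in the initial configuration (Theorem \ref{moninit1}) to propagate the inequality forward in time. More precisely, I would show that $\zeta_{\tau_k}^O \geq \eta_k$ as configurations on $\Z$, and conclude using the graphical construction restarted at $\tau_k$.

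For the pointwise comparison at $\tau_k$, I would treat three ranges of $x$ separately. For $x = k$: by the very definition of $\tau_k$, we have $r_{\tau_k} = k$, so $k \in I_{\tau_k}$ and $\zeta_{\tau_k}^O(k) = 1 = \eta_k(k)$. For $x < k$: the configuration $\eta_k$ equals $-1$ there, which is the minimum of $\{-1,0,1\}$, so the inequality is automatic. The substantive case is $x > k$, where I need to argue $\zeta_{\tau_k}^O(x) = -1 = \eta_k(x)$. This uses the nearest neighbours structure: changes of a site's state away from $-1$ occur only through $\lambda$-arrows incoming from an infected nearest neighbour, so if a site $x$ ever leaves state $-1$ by time $t$, then $x \in [l_s,r_s]$ for some $s \leq t$, in particular $x \leq r_s$. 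Now by definition of $\tau_k$ as the first hitting time of $k$ by $r_t$, we have $r_s < k$ for all $s < \tau_k$, and hence for $x > k$ the site $x$ has never left state $-1$ by time $\tau_k$, giving $\zeta_{\tau_k}^O(x) = -1$.

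Once $\zeta_{\tau_k}^O \geq \eta_k$ is established, the conclusion follows from monotonicity in the initial configuration. Both $\zeta_{t}^O$ (viewed for $t \geq \tau_k$ as $\zeta_t^{[\zeta_{\tau_k}^O,\tau_k]}$) and $\zeta_{t}^{[\eta_k,\tau_k]}$ are built on the same graphical construction from time $\tau_k$ onward. The proof of Theorem \ref{moninit1} is a pathwise induction over successive transition times showing that the component-wise partial order between two coupled three state contact processes is preserved by each transition, and this argument applies unchanged to processes started at a common time $\tau_k(\omega)$, whether deterministic or random. Hence $\zeta_t^O \geq \zeta_t^{[\eta_k,\tau_k]}$ for all $t \geq \tau_k$.

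The only step requiring genuine thought is the claim that $\zeta_{\tau_k}^O(x) = -1$ for all $x > k$; everything else is a direct translation of the definitions or a reference to the already-established monotonicity. This claim is ultimately a consequence of the nearest neighbours assumption, which is flagged in the paper as essential for the coupling lemmas in this subsection.
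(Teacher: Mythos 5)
Your proof is correct and takes essentially the same route as the paper: establish the pointwise inequality $\zeta_{\tau_{k}}^{O}\geq\eta_{k}$ (using $\zeta_{\tau_{k}}^{O}(k)=1$ from the definition of $\tau_{k}$) and then invoke monotonicity in the initial configuration from time $\tau_{k}$ onward. The only difference is that your ``substantive case'' $x>k$ is unnecessary: since $\eta_{k}(x)=-1$ is the minimal state there, the inequality is automatic exactly as in your $x<k$ case, which is all the paper's one-line proof uses (though the stronger fact you prove is true and is used elsewhere in the paper).
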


\begin{proof}
We have that $\zeta_{\tau_{k}}^{O}(k) =1$, because $\eta_{k}$ is the least infectious configuration such that $\eta_{k}(k)=1$, we also have $\zeta^{O}_{\tau_{k}} \geq \eta_{k}$ $\mbox{ for all }k=1,\dots,R,$ by monotonicity in the initial configuration the proof is complete. 
\end{proof}

\section{Exponential estimates}\label{Sexp}

This section is intended for proving two exponential estimates for three state contact processes that will be needed in Section $\ref{S3}$. The method used is based on a renormalization result of Durrett and Schinazi $\cite{DS}$ that is an extension of the well-known work of Bezuidenhout and Grimmett \cite{BG}.

Subsequent developments require understanding of oriented site percolation.
Consider the set of sites, $\mathbb{L} =\{ (y,n) \in \Z^{2} :  n \geq0 \mbox{ and } y+n \mbox{ is even}\}.$
For each site $(y,n) \in \mathbb{L}$ we associate an independent Bernoulli random variable $w(y,n)\in \{0,1\}$ with parameter $p>0$; if $w(y,n)=1$ we say that $(y,n)$ is \textit{open}. We write $(x,m)\rightarrow(y,n)$ whenever there exists a sequence of open sites $(x,m) \equiv (y_{0}, m),\dots, (y_{n-m}, n)  \equiv (y,n)$ such that and $|y_{i} - y_{i-1}|=1$ for all $i=1,\dots, n-m$. Define $(A_{n})_{n\geq0}$ with parameter $p$ as $A_{n} = \{y:(0,0) \rightarrow (y,n)\}$. We write $\{A_{n} \textup{ survives}\}$ as an abbreviation for $\{\forall \hspace{0.5mm} n\geq1: A_{n} \not= \emptyset \}$. 

The next proposition is the renormalization result, it is a consequence of Theorem  4.3 in Durrett \cite{D95}, where the comparison assumptions there hold due to Proposition 4.8 of Durrett and Schinazi $\cite{DS}$. For stating it, given constants $L,T$, we define the set of configurations $Z_{y} = \{ \zeta :  |\mathcal{I}(\zeta) \cap [-L+2Ly,L+2Ly] | \geq L^{0.6} \}$, for all integers $y$.

\begin{prop}\label{couplDS}
Let $\eta$ be any configuration such that $\eta \in Z_{0}$, consider $\zeta_{t}^{\eta}$ with parameters $(\lambda, \mu)$ such that $\mu >\mu_{c}$ and $\lambda>0$. For all $p<1$ there exist constants $L,T$ such that $\zeta_{t}^{\eta}$ can be coupled to $A_{n}$ with parameter $p$ so that, 
\[
y \in A_{n} \hspace{1mm} \Rightarrow \hspace{1mm} \zeta_{nT}^{\eta} \in Z_{y}
\]
$(y,n) \in \mathbb{L}$. In particular the process survives. 
\end{prop}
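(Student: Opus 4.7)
The strategy is to apply the general block-comparison theorem of Durrett \cite{D95}, Theorem 4.3, which takes any interacting particle system satisfying a suitable \emph{restart estimate} on large space-time blocks and produces the coupling to a supercritical oriented site percolation as stated, with $p$ chosen as close to $1$ as desired by tuning $L$ and $T$. Thus the proof reduces to verifying that restart estimate for $\zeta_t^\eta$ under the hypotheses $\mu>\mu_c$, $\lambda>0$.

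Concretely, what needs to be checked is that for every $\epsilon>0$ one can choose $L$ and $T$ large so that, starting from any configuration $\eta\in Z_0$, with probability at least $1-\epsilon$ one has $\zeta_T^\eta\in Z_{-1}\cap Z_{1}$ (after an appropriate spatial shift of the block), and moreover this event is measurable with respect to the graphical construction in a finite space-time region of width $O(L)$ and duration $T$; the finite-range property is what allows the resulting comparison process to be coupled to oriented site percolation with independent site variables. The whole point is that the threshold $L^{0.6}$ in the definition of $Z_y$ is what the Bezuidenhout--Grimmett construction delivers.

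The second step is to produce that estimate. This is done in Proposition 4.8 of Durrett and Schinazi \cite{DS}, who adapt the Bezuidenhout--Grimmett machinery \cite{BG} to the three state contact process: once $\mu>\mu_c$, the $0\to 1$ dynamics alone dominates a supercritical basic contact process, so the usual block construction (growth of a single infected region into a wide well-populated block in bounded time with high probability) goes through, and the additional $-1\to 1$ transitions at rate $\lambda>0$ can only help by the monotonicity of Theorem \ref{moninit1}. Invoking \cite[Theorem 4.3]{D95} with these estimates directly yields the coupling $y\in A_n\Rightarrow \zeta_{nT}^\eta\in Z_y$. The ``in particular'' clause is immediate: for $p$ sufficiently close to $1$, oriented site percolation on $\mathbb{L}$ survives with positive probability from any single open site, hence so does $\zeta_t^\eta$.

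The main obstacle, strictly speaking, is the verification of the block restart estimate, but this is the content of \cite[Proposition 4.8]{DS}, so the argument here is essentially a matter of quoting the two theorems and checking that the parameters $L$, $T$, and the threshold $L^{0.6}$ used in the definition of $Z_y$ match the input format of \cite[Theorem 4.3]{D95}. I would therefore keep the write-up short: state the restart estimate in the form required, cite \cite{DS} for it, cite \cite{D95} for the comparison, and read off the conclusion.
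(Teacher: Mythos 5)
Your proposal coincides with the paper's own treatment: the paper offers no argument beyond observing that the proposition follows from Theorem 4.3 of Durrett \cite{D95}, with the comparison hypotheses supplied by Proposition 4.8 of Durrett and Schinazi \cite{DS}, which is exactly the reduction you make. (Your side remark that the $0\rightarrow1$ dynamics alone dominates a supercritical contact process is not accurate---the never-infected sites, reachable only at rate $\lambda$, are precisely the difficulty, and Theorem \ref{moninit1} concerns monotonicity in the initial configuration, not in the rates---but since you ultimately quote \cite{DS} for the block estimate, this does not affect the validity of the argument.)
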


The first of the exponential estimates that we need for Section $\ref{S3}$ is the following.

\begin{prop}\label{expbounds3scp}
Consider $\zeta_{t}^{O}$ with parameters $(\lambda, \mu)$. Let also $I_{t} = \mathcal{I}(\zeta_{t}^{O})$, $r_{t} = \sup I_{t}$ and $R= \sup_{t\geq0} r_{t}$, further let $\rho = \inf\{t: I_{t} = \emptyset\}$. If $(\lambda, \mu)$ are such that $\mu >\mu_{c}$ and $\mu\geq\lambda>0$ then there exist constants $C$ and $\gamma>0$ such that
\begin{equation}\label{eqexpbounds3scp}
\pr(R  \geq n, \mbox{ }\rho<\infty) \leq Ce^{-\gamma n}, 
\end{equation}
for all $n\geq1$.
\end{prop}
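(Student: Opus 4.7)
The plan is to reduce the claim to a classical exponential tail bound on the spatial range of finite open clusters in supercritical oriented site percolation, via the renormalization of Proposition \ref{couplDS} together with the restart strategy afforded by Lemma \ref{Sk}.

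First, I would fix $p<1$ close enough to $1$ so that the oriented site percolation $(A_n)_{n\geq0}$ with parameter $p$ not only survives but also obeys the classical exponential estimate
\[
\pr\bigl(A_\cdot\textup{ dies out},\ \sup\{y:(y,m)\in A_\cdot\textup{ for some }m\geq0\}\geq N\bigr)\leq C_1 e^{-\gamma_1 N},
\]
for constants $C_1,\gamma_1>0$. Let $L,T$ be the corresponding constants from Proposition \ref{couplDS}. Since $\zeta_0^O=\eta_0\notin Z_0$, Proposition \ref{couplDS} does not directly apply at $t=0$; instead I consider the stopping times $\tau_{2Lk}$ for $k=1,\dots,\lfloor n/(2L)\rfloor$. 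By Lemma \ref{Sk}, $\zeta_t^O\geq\zeta_t^{[\eta_{2Lk},\tau_{2Lk}]}$ for $t\geq\tau_{2Lk}$, so on $\{\rho<\infty\}$ every one of these restart processes also dies out.

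Next, for each such $k$ I would define a local success event $G_k$ depending only on the Poisson events within a spatial window of width less than $2L$ around $2Lk$ and a bounded time window of length $T_0$ after $\tau_{2Lk}$, such that $G_k$ implies the restart process enters a configuration in some $Z_y$ within time $T_0$. A standard local construction argument yields $\pr(G_k\mid\tau_{2Lk}<\infty)\geq q>0$ uniformly in $k$, and since the spatial windows are mutually disjoint, the $G_k$ are conditionally independent given the stopping times. Once some $G_k$ succeeds, Proposition \ref{couplDS} couples the restart process---and hence $\zeta_t^O$---with a supercritical oriented percolation emanating from $y$, which on $\{\rho<\infty\}$ must be a finite open cluster, and on $\{R\geq n\}$ has spatial range at least $\lfloor n/(2L)\rfloor-k$. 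Splitting $\{R\geq n,\rho<\infty\}$ into the sub-event that no $G_k$ succeeds (probability at most $(1-q)^{\lfloor n/(2L)\rfloor}$) and its complement (controlled by the classical exponential estimate above) yields the desired bound $Ce^{-\gamma n}$.

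The hardest part will be the careful bookkeeping required to ensure that the local success events $G_k$ are genuinely independent despite the history-dependent stopping times $\tau_{2Lk}$, and to lift the finite-time, finite-range events $G_k$ to the asymptotic event $\{\rho<\infty\}$ through the percolation coupling. These difficulties can be handled by exploiting the independence of the graphical construction on disjoint spatial regions and by iterating the restart argument at the renormalized scale.
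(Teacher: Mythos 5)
Your ingredients (the renormalization of Proposition \ref{couplDS}, restarts via Lemma \ref{Sk}, a local construction giving a uniform success probability $q>0$) are reasonable, but the final split does not close, and the gap lies in the direction of the coupling. Proposition \ref{couplDS} is one-sided: $y\in A_{n}$ implies $\zeta_{nT}\in Z_{y}$, so open percolation sites force infection, but nothing forces the renormalized percolation cluster to track the spread of the true process. Hence on $\{R\geq n\}$ the cluster emanating from a successful $G_{k}$ need \emph{not} have spatial range of order $\lfloor n/(2L)\rfloor-k$; it can die out almost immediately while the actual rightmost infected still travels to $n$ along paths invisible to the percolation. Concretely, the event that $G_{1}$ succeeds and its coupled cluster dies has probability bounded below by a constant independent of $n$, so the complement of ``no $G_{k}$ succeeds'' cannot be made small by the cluster-range estimate: your two-event decomposition leaves exactly this piece uncontrolled. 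A genuinely spatial restart proof can be made to work, but after a successful attempt whose percolation dies you must launch a fresh attempt beyond the (exponentially bounded) space--time region already explored and show that the number of independent attempts available within distance $n$ remains linear in $n$; this sequential bookkeeping is essentially what the paper develops later for the break points (Lemma \ref{bptsinf} and Proposition \ref{PROPexpbnd}) and is much heavier than your sketch. The claimed conditional independence of the $G_{k}$ from spatial disjointness alone is also shaky: the front can reach $2L(k+1)$ within time $T_{0}$ of reaching $2Lk$, so $G_{k}$ need not be $\mathcal{F}_{\tau_{2L(k+1)}}$-measurable, and a sequential conditioning argument at the stopping times would be required.

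The paper's own proof is shorter and purely temporal. Using exactly your ingredients (monotonicity, comparison of $\zeta_{t}^{O}$ with $\zeta_{t}^{[\eta_{x},s]}$ for $x\in I_{s}$, and $\delta=\pr(\zeta_{1}^{O}\in Z_{0})>0$) it first obtains, as in Theorem 2.30(a) of Liggett, the bound $\pr(t<\rho<\infty)\leq Ce^{-\gamma t}$. It then splits $\{R>n,\ \rho<\infty\}$ according to whether $\rho$ exceeds a time of order $n$: the large-$\rho$ piece is handled by that bound, while on the small-$\rho$ piece the rightmost must advance by $n$ within a time of order $n$; since a site in state $-1$ can only be first infected through a $\lambda$-arrow, $\sup_{t\leq u}r_{t}$ is dominated by a rate-$\lambda$ Poisson count, and Poisson large deviations (with a suitably chosen linear time window) finish the proof. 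Either redo your argument along these temporal lines, or carry out the full sequential spatial restart; as written, the proposal has a genuine gap.
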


\begin{proof}
Consider the graphical construction for  $(\lambda, \mu)$ as in the statement. Recall the component-wise partial order on the space of configurations, the property of monotonicity in the initial configuration that were introduced in subsection $\ref{coupling}$ and,  the configurations $\eta_{k}$ as in Definition \ref{defetak}. By Proposition $\ref{couplDS}$, emulating the proof of Theorem 2.30 (a) of Liggett \cite{L99}, we have that 
\begin{equation}\label{rhoconfin}
\mbox{ } \pr( t < \rho < \infty) \leq Ce^{-\gamma t}, 
\end{equation}
for all $t\geq0$; to see that the arguments given there apply in this context note that, by monotonicity in the initial configuration, for any time $s>0$ and any $x\in I_{s}$, considering the process $\zeta_{t}^{ [\eta_{x}, s]}$ we have that $\zeta_{t}^{O} \geq  \zeta_{t}^{[\eta_{x}, s]}$ for all $t\geq s$, hence, the proof we referred to applies for $\delta = \pr(\zeta_{1}^{O} \in Z_{0}) >0$.  

For proving $(\ref{eqexpbounds3scp})$, by set theory we have that for all $n \geq 1$,
\begin{equation*}\label{RT}
\pr\left(R  > n, \rho < \infty\right) \leq \pr\left( \frac{n}{\lambda} < \rho< \infty\right) + \pr\left(\rho < \frac{n}{\lambda}, \mbox{ } R  > n\right)
\end{equation*}
the first term on the right hand side decays exponentially in $n$ due to $(\ref{rhoconfin})$, thus, it remains to prove that the probability of the event $\{\sup_{t\leq \frac{n}{\lambda}} r_{t} >n\}$ decays exponentially in $n$, which however is immediate because $\sup_{t \in (0,u]} r_{t}$ is bounded above in distribution by the number of events by time $u$ in a Poisson process at rate $\lambda$ and standard large deviations results for the latter. 
\end{proof}

The following elementary result for independent site percolation as well as the subsequent geometrical lemma are needed in the proof of Proposition $\ref{shadldev2}$ below.

\begin{lem}\label{indperc}
Consider $(A_{n})$ with parameter $p$ and define  $R_{n} = \sup A_{n}, n\geq0$.
For $p$ sufficiently close to 1 there are strictly positive and finite constants $a,\gamma$ and $C$ such that
\begin{equation*}\label{Rnspproc}
\pr( R_{n} < an, \mbox{ } A_{n} \textup{ survives}) \leq Ce^{- \gamma n}, 
\end{equation*}
for all $n\geq1$. 
\end{lem}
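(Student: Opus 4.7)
The plan is to reduce the estimate to a stationary half-space version of the percolation and then control its right edge by a Peierls-type contour argument that exploits $p$ being close to $1$. I proceed in three steps.

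\emph{Step 1 (edge speed).} Let $\bar A_n$ denote the oriented percolation started at level $0$ from $\{x\leq 0\}\cap\mathbb{L}$, with right edge $\bar r_n:=\sup\bar A_n$. By attractiveness in the joint coupling and translation invariance in the $y$-direction, one has the distributional (in fact pointwise, after coupling) subadditive bound $\bar r_{m+n}\leq \bar r_m+\bar r_n^{\mathrm{ind}}$, where $\bar r_n^{\mathrm{ind}}$ is an independent copy of $\bar r_n$. Standard subadditive arguments (Durrett \cite{D95}, Chapter~3, or an invocation of Kingman) yield a deterministic limit $\alpha(p):=\lim_n \bar r_n/n$ a.s. A crude comparison with a biased nearest-neighbour walk whose right attempts succeed with probability $p$ shows $\alpha(p)\geq 2p-1$, so $\alpha(p)>0$ (indeed $\alpha(p)\to 1$) as $p\to 1$. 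Fix $a\in(0,\alpha(p))$.

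\emph{Step 2 (lower-tail estimate for $\bar r_n$).} I would establish
\[
\pr\bigl(\bar r_n<an\bigr)\leq C_1 e^{-\gamma_1 n}
\]
by a Peierls-type contour argument. On $\{\bar r_n<an\}$ there must exist a barrier of closed sites of cardinality at least $\delta n$, for some $\delta=\delta(a)>0$, separating $\{x\leq 0\}\times\{0\}$ from the target region $[an,\infty)\times\{n\}$; this is a dual oriented path in the standard oriented-percolation duality. The number of such barriers of cardinality $k$ is bounded by $C_0^k$ for some combinatorial constant $C_0$, and each given barrier is entirely closed with probability $(1-p)^k$. Choosing $p$ close enough to $1$ that $C_0(1-p)<1$ and summing the resulting geometric series from $k=\lceil\delta n\rceil$ gives the bound.

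\emph{Step 3 (transfer to $R_n$).} Attractiveness in the joint coupling yields $R_n\leq \bar r_n$, so
\[
\{R_n<an\}\cap\{A_n\ \textup{survives}\}\subseteq\{\bar r_n<an\}\cup\{R_n<\bar r_n,\ A_n\ \textup{survives}\}.
\]
The first event is handled by Step 2. For the second, a variant of the same contour argument applies: on $\{R_n<\bar r_n,\ A_n\ \textup{survives}\}$ there is an open path from $(0,0)$ to level $n$, yet a barrier of closed sites of linear size must separate the open cluster of the origin from the rightmost trajectory of $\bar A_n$, and the same Peierls bound applies.

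The main obstacle is the Peierls estimate in Step 2: one must specify the correct notion of blocking barrier as a dual oriented path, ensure its cardinality is genuinely $\Omega(n)$ (a slight thickening of the region may be needed so that a purely "vertical" barrier has at least $\delta n$ closed sites), and verify the combinatorial bound $C_0^k$ on the number of such barriers. This is the only step in which the hypothesis that $p$ is sufficiently close to $1$ enters, through the requirement $C_0(1-p)<1$; the remaining steps rely only on attractiveness and subadditivity, which hold throughout the supercritical regime.
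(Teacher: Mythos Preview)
Your overall strategy---pass to the half-space process and bound the lower tail of its right edge---matches the paper's, but the execution diverges at two points.

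\textbf{Step 3 is overcomplicated because the second event is empty.} For nearest-neighbour oriented percolation on $\mathbb{L}$, paths cannot cross: if $(x_1,0)\to(y_1,n)$ and $(x_2,0)\to(y_2,n)$ with $x_1\leq x_2$, then one can always arrange $y_1\leq y_2$. Consequently, on $\{A_n\neq\emptyset\}$ (and \emph{a fortiori} on $\{A_n\ \textup{survives}\}$) one has the exact equality $R_n=\bar r_n$, not merely $R_n\leq\bar r_n$. This is precisely what the paper uses (``$R_n=R'_n$ on $\{A_n\ \textup{survives}\}$''). Your event $\{R_n<\bar r_n,\ A_n\ \textup{survives}\}$ is therefore empty, and no second Peierls argument is needed. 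As written, your justification for that second event (``a barrier of closed sites of linear size must separate\ldots'') is not obviously correct, so it is fortunate that the event vanishes.

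\textbf{Step 2: the paper avoids doing this from scratch.} Rather than carrying out a contour argument, the paper couples site percolation with parameter $p=\tilde p(2-\tilde p)$ above bond percolation $B'_n$ with parameter $\tilde p$ (so that $B'_n\subset A'_n$; see Liggett \cite{L99}, p.~13), and then simply cites the known large-deviations estimate for the right edge of supercritical oriented bond percolation (Durrett \cite{D84}, Section~11, (1)). Your Peierls sketch is plausible for $p$ close to $1$, but the claim that any blocking dual path has cardinality at least $\delta n$ and the combinatorial bound $C_0^k$ would need to be made precise; the paper's route sidesteps this work entirely.

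In summary: fix Step~3 by using the equality $R_n=\bar r_n$ on survival, and either flesh out the Peierls estimate in Step~2 or, as the paper does, reduce to bond percolation and cite the existing large-deviations result.
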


\begin{proof}
Define $A'_{n} = \{y: (x,0) \rightarrow (y,n) \mbox{ for some } x \leq0\}$ and let $R'_{n}= \sup A'_{n}$, $n\geq1$.
Because $R_{n} = R'_{n}$ on $\{A_{n} \textup{ survives}\}$, it is sufficient to prove that $p$ can be chosen sufficiently close to 1 such that, for some $a>0$, the probability of the event $R'_{n} < an$ decays exponentially in $n \geq 0$. Letting $B_{n}'$ be independent oriented bond percolation on $\mathbb{L}$ with supercritical parameter $\tilde{p}<1$ started from $\{(x,0) \in \mathbb{L} : x\leq0 \}$, the result follows from the corresponding large deviations result for $B_{n}'$ (see Durrett \cite{D84}, (1) in section 11), because for $p= \tilde{p}(2-\tilde{p})$ we have that $B_{n}'$ can be coupled to $A_{n}'$ such that $B_{n}' \subset A_{n}'$ holds, see Liggett \cite{L99}, p.13.
\end{proof}

\begin{lem}\label{geomRR+}
Let $b,c$ be strictly positive constants. For any $a < c$ we can choose sufficiently small $\phi \in (0,1)$, that does not depend on $t \in \R\geq0$, such that for all $x \in [-b \phi t,b \phi t]$, 
\begin{equation}\label{eq:geom}
[x - c (1-\phi)t, x + c (1-\phi)t] \supseteq [-at,at],
\end{equation}
$t\geq0$.
\end{lem}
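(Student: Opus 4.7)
The statement is an elementary inequality, so the plan is essentially to reduce the set-theoretic containment to a linear inequality in $\phi$ and verify that $a < c$ makes it solvable uniformly in $t$.

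First I would rewrite the containment $[x - c(1-\phi)t, x + c(1-\phi)t] \supseteq [-at, at]$ as the pair of endpoint conditions $x - c(1-\phi)t \leq -at$ and $x + c(1-\phi)t \geq at$, that is,
\begin{equation*}
at - c(1-\phi)t \;\leq\; x \;\leq\; c(1-\phi)t - at.
\end{equation*}
For this to hold for every $x \in [-b\phi t, b\phi t]$, it is necessary and sufficient (by symmetry of the two resulting constraints) that
\begin{equation*}
b\phi t \;\leq\; c(1-\phi)t - at,
\end{equation*}
or equivalently, after dividing by $t>0$ and rearranging, $\phi(b+c) \leq c - a$. The case $t = 0$ is trivial since both sides reduce to $\{0\}$.

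Next I would use the hypothesis $a < c$: this makes $(c-a)/(b+c) > 0$, so any choice
\begin{equation*}
\phi \;\in\; \bigl(0,\min\{1,(c-a)/(b+c)\}\bigr)
\end{equation*}
satisfies $\phi(b+c) \leq c - a$. Since the resulting inequality is homogeneous in $t$, the factor $t$ cancels and the chosen $\phi$ works simultaneously for all $t \geq 0$, as required.

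There is no real obstacle here; the only thing to be careful about is to track the symmetry that allows both endpoint inequalities to be reduced to the single constraint above, and to note that the bound on $\phi$ is independent of $t$ precisely because both the containing interval and the variable range of $x$ scale linearly in $t$.
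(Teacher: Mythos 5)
Your proof is correct and follows essentially the same route as the paper: both reduce the containment to the worst-case endpoint $x = b\phi t$ and solve the resulting linear inequality, arriving at the same threshold $\phi < (c-a)/(c+b)$. Your version is slightly more explicit about the necessity/sufficiency of the endpoint reduction and the $t=0$ case, but there is no substantive difference.
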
 

\begin{proof}
Note that it is sufficient to consider $x = brt$; then, simply choose $\phi$ such that $btr -c(1-\phi)t< - at$, $\mbox{i.e.}$ for 
$\displaystyle{ \phi < \frac{c-a}{c+b}}$, $\phi>0$, equation $(\ref{eq:geom})$ holds. 
\end{proof}

The other exponential estimate we will need in Section $\ref{S3}$ is the following.

\begin{prop}\label{shadldev2}
Let  $\bar{\eta}$ such that $\bar{\eta}(x) =1$ for all $x \leq 0$ while $\bar{\eta}(x) =-1$ otherwise. Consider $\zeta^{\bar{\eta}}_{t}$ with parameters $(\lambda, \mu)$ and let $\displaystyle{ \bar{r}_{t} =\sup \mathcal{I}(\zeta^{\bar{\eta}}_{t})}$. If $(\lambda, \mu)$ are such that $\mu >\mu_{c}$ and $\mu\geq\lambda>0$ then there exist strictly positive and finite constants $a,\gamma$ and $C$ such that
\[
\pr\left(\bar{r}_{t}  < at\right) \leq Ce^{-\gamma t},
\]
for all $t\geq0$.
\end{prop}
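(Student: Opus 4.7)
Plan:

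The approach is to reduce the problem to half-line oriented percolation via the renormalization Proposition \ref{couplDS}, apply the large-deviation bound of Lemma \ref{indperc}, and use Lemma \ref{geomRR+} to pass from the discrete times $nT$ dictated by the renormalization to arbitrary $t\geq 0$. The starting observation is that for $L$ large enough, $\bar{\eta}\in Z_y$ for every $y\leq -1$: the block $[-L+2Ly,L+2Ly]$ is then contained in $(-\infty,0]$ and all $2L+1\geq L^{0.6}$ of its sites are infected under $\bar{\eta}$. Monotonicity in the initial configuration (Theorem \ref{moninit1}) together with translation invariance then permits coupling $\zeta_{t}^{\bar{\eta}}$ with a half-line oriented site percolation $A_{n}'$ having all sources $\{(y,0):y\leq 0\}\cap\mathbb{L}$ open, in such a way that $y\in A_{n}'\Rightarrow\zeta_{nT}^{\bar{\eta}}\in Z_{y}$. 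Choosing $p$ sufficiently close to $1$ and $L$ correspondingly large so that Lemma \ref{indperc} applies, one gets $\pr(R_{n}'<a'n)\leq Ce^{-\gamma n}$ for some $a'>0$; on the complementary event there is $y^{*}\in A_{n}'$ with $y^{*}\geq a'n$ and hence $\bar{r}_{nT}\geq -L+2La'n$, which is the desired estimate at $t=nT$.

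To pass from multiples of $T$ to an arbitrary $t$, I would split $t=\phi t+(1-\phi)t$ with small $\phi\in(0,1)$ chosen according to Lemma \ref{geomRR+}, setting $b=c:=2La'/T$ and fixing a target $a<c$. Applied at the step $n_{\phi}:=\lfloor\phi t/T\rfloor$, the above argument gives, outside an exponentially small event, a $y^{*}\in A'_{n_{\phi}}$ with $y^{*}\geq a'n_{\phi}$ and thus $\zeta_{n_{\phi}T}^{\bar{\eta}}\in Z_{y^{*}}$. By the strong Markov property at time $n_{\phi}T$, the subsequent evolution is driven by Poisson marks on $[n_{\phi}T,\infty)$, and starting from a configuration in $Z_{y^{*}}$ one may reapply Proposition \ref{couplDS} to obtain a further percolation $\tilde{A}_{m}$ sourced at $(y^{*},0)$ satisfying $y\in\tilde{A}_{m}\Rightarrow\zeta_{n_{\phi}T+mT}^{\bar{\eta}}\in Z_{y}$. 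A second application of Lemma \ref{indperc} yields $\tilde{R}_{m}\geq y^{*}+a'm$ outside an exponentially small event, where $m:=\lfloor(t-n_{\phi}T)/T\rfloor$. Combining the two phases, there is an infected site in $\zeta_{n_{\phi}T+mT}^{\bar{\eta}}$ at position $\geq 2Ly^{*}+2La'm-L$; writing $x:=2Ly^{*}\in[-b\phi t,b\phi t]$, Lemma \ref{geomRR+} then delivers $x+c(1-\phi)t\geq at$ as soon as $\phi\leq(c-a)/(b+c)$.

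The main obstacle I expect is the residual time gap $t-n_{\phi}T-mT\in[0,T)$, on which the rightmost $\bar{r}$ need not be monotone and can drop discontinuously. My plan is to exploit the multiplicity of infected sites inside each block $[-L+2Ly^{**},L+2Ly^{**}]$ for $y^{**}\in\tilde{A}_{m}$ (at least $L^{0.6}$ sites, by the definition of $Z_{y^{**}}$), together with the independence of the recovery Poisson clocks at distinct sites. Using a standard oriented-percolation density estimate (of the same flavor as Lemma \ref{indperc}) giving a linear-in-$m$ number of occupied sites in a rightmost window of $\tilde{A}_{m}$, the probability that every one of the resulting $\gtrsim L^{0.6}m$ guaranteed infected sites recovers during a gap of length less than $T$ is bounded by $\bigl((1-e^{-T})^{L^{0.6}}\bigr)^{c_{0}m}\leq e^{-\gamma t}$. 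Finally, choosing $a$ strictly less than $2La'/T$ provides enough slack in Lemma \ref{geomRR+} to absorb the $O(1)$ corrections arising from the integer parts in $n_{\phi}$ and $m$, delivering the stated exponential bound.
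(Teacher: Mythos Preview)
Your opening move---noting that $\bar\eta\in Z_y$ for every $y\le-1$ once $L$ is large, and hence coupling $\zeta_t^{\bar\eta}$ directly to half-line site percolation $A_n'$---is a legitimate alternative to the paper's argument and in some ways cleaner. The paper proceeds differently: it runs the restart construction behind (\ref{rhoconfin}), waiting an exponentially bounded random time $\sigma$ until a \emph{surviving} single-source percolation $A_n$ is launched from $\bar r_\sigma$; Lemma \ref{indperc} is then applied to that percolation on its survival set, Lemma \ref{piprendcoup1} identifies its right edge with $\bar r_t$, and Lemma \ref{geomRR+} is invoked solely to absorb the random space--time offset $(\bar r_\sigma,\sigma+1)\in[-\tilde\lambda\phi t,\tilde\lambda\phi t]\times(0,\phi t]$. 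Your half-line coupling bypasses the restart altogether, and with it any genuine need for Lemma \ref{geomRR+}.

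There is, however, a real gap in your Phase~2. You relaunch a \emph{single-source} percolation $\tilde A_m$ from $(y^*,0)$ and appeal to Lemma \ref{indperc}; but that lemma only bounds $\pr(\tilde R_m<y^*+a'm,\ \tilde A_m\text{ survives})$ and is silent on $\{\tilde A_m\text{ dies out}\}$, an event whose probability is bounded away from $0$ uniformly in $m$ and hence in $t$. Your Phase~2 bound is therefore not exponentially small in $t$. This is exactly the obstruction the paper's restart is designed to remove---and the one your own half-line coupling already removed in Phase~1. In fact the two-phase split is unnecessary: the half-line process $A_n'$ is defined for all $n$, and the estimate $\pr(R_n'<a'n)\le Ce^{-\gamma n}$ (which is precisely what is established inside the proof of Lemma \ref{indperc}) already yields $\bar r_{nT}\ge 2La'n-L$ at $n=\lfloor t/T\rfloor$, outside probability $Ce^{-\gamma t/T}$. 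Only the single window $[nT,t)$ of length $<T$ then remains; your many-blocks/independent-recoveries idea for bridging it is sound once it is fed by the linear-in-$n$ density of $A_n'$ itself rather than by that of the possibly extinct $\tilde A_m$. (Your application of Lemma \ref{geomRR+} with $b=c$ and $x=2Ly^*$ is also garbled---$y^*\ge a'n_\phi$ puts $x$ at or beyond the boundary of $[-b\phi t,b\phi t]$---but since the lemma is not actually needed in your route this is moot.)
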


\begin{proof} 
Consider the graphical construction for  $(\lambda, \mu)$ as in the statement. Let $p$ be sufficiently close to $1$ so that Lemma $\ref{indperc}$ is satisfied. Recall the configurations $\eta_{x}$ as in Definition \ref{defetak}. By the proof of Theorem 2.30 (a) of Liggett \cite{L99}---which applies for the reasons explained in the first paragraph of the proof of Proposition $\ref{expbounds3scp}$---, we have that total time $\sigma$ until we get a percolation process $A_{n}$ with parameter $p$ that is coupled to $\zeta_{t}^{[\eta_{\bar{r}_{\sigma}},\sigma]}$ as explained in Proposition $\ref{couplDS}$ (for $\bar{r}_{\sigma} \times (\sigma +1)$ being thought of as the origin) and is conditioned on $\{A_{n} \textup{ survives}\}$, is exponentially bounded. From this, because $\bar{r}_{t}$ is bounded above in distribution by a Poisson process,  we have that there exists a constant $\tilde\lambda$ such that the event 
$\left\{\bar{r}_{\sigma} \times (\sigma +1) \in  [ -\tilde{\lambda}t d, \tilde{\lambda}td] \times (0, td]\right\}$, for all $d \in (0,1)$, occurs outside some exponentially small probability in $t$. Finally on this event, by Lemma $\ref{indperc}$ and the coupling in Lemma $\ref{piprendcoup1}$, we have that there exists an $\tilde{a}>0$ such that $\bar{r}_{t} \geq \tilde{a}t - \bar{r}_{\sigma}$, again outside some exponentially small probability in $t$, choosing $\tilde{\lambda}=b$  and $\tilde{a}=c$ in Lemma $\ref{geomRR+}$ completes the proof.%---in fact the result then holds for all $a<\tilde{a}$. 

\end{proof}

\section{Main Results}\label{S3} 
This section is organized as follows. In Subsection \ref{S31} we prove Theorem \ref{THEprop} stated below; based on this theorem, we prove Theorem \ref{THEthm} in Subsection \ref{S32}.

\begin{thm}\label{THEprop}
Consider $\zeta_{t}^{O}$ with parameters $(\lambda, \mu)$ and let $r_{t} = \sup\mathcal{I}(\zeta_{t}^{O})$. Suppose $(\lambda, \mu)$ such that $\mu >\mu_{c}$ and $\mu\geq\lambda>0$. On $\{\zeta^{O}_{t} \mbox{\textup{ survives}}\}$ there exist random (but not stopping) times $\tilde{\tau}_{0}:=0 < \tilde{\tau}_{1} < \tilde{\tau}_{2} < \dots$ such that $(r_{\tilde{\tau}_{n}} -r_{\tilde{\tau}_{n-1}}, \tilde{\tau}_{n}- \tilde{\tau}_{n-1})_{n \geq 1}$ are i.i.d.\ random vectors, where also $r_{\tilde{\tau}_{1}} \geq 1$ and $\displaystyle{r_{\tilde{\tau}_{n}} = \sup_{t\leq \tilde{\tau}_{n}} r_{t}}$. Furthermore, letting $M_{n}=r_{\tilde{\tau}_{n}}-\inf_{t \in[ \tilde{\tau}_{n}, \tilde{\tau}_{n+1})} r_{t}$, $n \geq 0$, we  have that $(M_{n})_{n\geq0}$ are i.i.d.\ random variables, where also $M_{n} \geq0$. Finally,  $r_{\tilde{\tau_{1}}}, \tilde{\tau}_{1},M_{0}$ are exponentially bounded. 
\end{thm}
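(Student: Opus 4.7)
The plan is to carry out a Kuczek-type break-point construction, leveraging the coupling lemmas of Section \ref{coupling} together with the exponential estimate of Proposition \ref{expbounds3scp}. Since the process is nearest-neighbour, $r_t$ can only increase in unit jumps, so the stopping times $\tau_k = \inf\{t : r_t = k\}$ from Lemma \ref{Sk} satisfy $\tau_1 < \tau_2 < \cdots$ whenever finite, and at time $\tau_k$ one has $\sup_{s\le\tau_k} r_s = k$. I call $k\ge 1$ a \emph{break point} when $\tau_k < \infty$ and the subprocess $\zeta_t^{[\eta_k,\tau_k]}$ (with $\eta_k$ as in Definition \ref{defetak}) survives, enumerate the break points as $K_1 < K_2 < \cdots$, and set $\tilde{\tau}_n := \tau_{K_n}$; automatically $r_{\tilde{\tau}_n} = K_n = \sup_{t\le\tilde{\tau}_n} r_t$ and $r_{\tilde{\tau}_1} \ge 1$.

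Next I would establish the coupling identity and the regeneration. At $\tau_k$ no site $> k$ has ever been infected in $\zeta^O$ (because $r$ moves upward in unit jumps), so $\zeta^O_{\tau_k}$ agrees with $\eta_k$ on $\{k,k+1,\ldots\}$ and, by Lemma \ref{Sk}, dominates it everywhere. A shifted variant of the proof of Lemma \ref{piprendcoup1} then gives, on the event $\{\mathcal{I}(\zeta_t^{[\eta_k,\tau_k]}) \neq \emptyset\}$, that $\zeta_t^O(x)=\zeta_t^{[\eta_k,\tau_k]}(x)$ for every $x$ at least the leftmost infected site of $\zeta_t^{[\eta_k,\tau_k]}$; in particular $r_t = \sup\mathcal{I}(\zeta_t^{[\eta_k,\tau_k]})$ throughout the subprocess's lifetime. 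By the strong Markov property and memorylessness of the Poisson processes in the graphical construction, conditional on $\tau_k<\infty$ the post-$\tau_k$ subprocess $\zeta_t^{[\eta_k,\tau_k]}$ is distributed as an independent copy of $\zeta_t^O$ translated by $(k,\tau_k)$; hence ``$k$ is a break point'' has conditional probability $\beta$ and is independent of the graphical construction up to $\tau_k$. Iterating across successive break points then delivers the i.i.d.\ structure of $(r_{\tilde{\tau}_n}-r_{\tilde{\tau}_{n-1}}, \tilde{\tau}_n-\tilde{\tau}_{n-1})$ and of $(M_n)$, since between $\tilde{\tau}_n$ and $\tilde{\tau}_{n+1}$ all the relevant quantities are measurable with respect to the post-$\tilde{\tau}_n$ subprocess.

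For existence of $K_1$ on $\{\zeta^O \text{ survives}\}$ and for the exponential tails on $K_1, \tilde{\tau}_1, M_0$, I would apply Proposition \ref{expbounds3scp} to each failed subprocess: if $k$ is not a break point then $\zeta_t^{[\eta_k,\tau_k]}$ dies out, and Proposition \ref{expbounds3scp} controls both its maximum rightward reach and, via the coupling identity, the leftward excursion of $r_t$ during its lifetime, each exponentially. A geometric-trials argument with per-trial success probability $\beta$ and exponentially bounded overhead then yields exponential tails on $K_1 = r_{\tilde{\tau}_1}$; on $\tilde{\tau}_1$, by combining the per-trial time bound with Proposition \ref{shadldev2}, which provides a linear-in-$k$ lower bound on $\tau_k$ in the relevant regime; and on $M_0$, as an aggregate of exponentially bounded per-failure leftward excursions. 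The main obstacle I anticipate is ensuring \emph{genuine} independence of the successive Bernoulli trials: they share the same graphical construction and so are a priori dependent. The Kuczek-style resolution is to apply the strong Markov property at each $\tau_k$ and to use Proposition \ref{expbounds3scp} to argue that the randomness consumed by any failed trial is exponentially localized, so that subsequent trials effectively see fresh portions of the graphical construction; making this bookkeeping rigorous is where I expect the bulk of the technical effort to lie.
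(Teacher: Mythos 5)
Your break-point construction is the same as the paper's (Definition \ref{definbpts}), and you have assembled the right ingredients, but the heart of the theorem --- why the successive increments are \emph{exactly} i.i.d.\ under the measure conditioned on $\{\zeta_{t}^{O}\mbox{ survives}\}$ --- is precisely the part you defer, and the hint you give for resolving it would not work as stated. Trials at nearby sites overlap in space-time, and no argument that the randomness used by a failed trial is ``exponentially localized'' can produce the exact independence claimed in the statement; Proposition \ref{expbounds3scp} plays no role in the independence itself. The paper's mechanism is exact, not approximate: after a failed trial one skips ahead to $Y_{n+1}=1+\sup$ of the right edge over the failed trial's lifetime; its hitting time $T_{Y_{n+1}}$ is a genuine stopping time occurring after the failed trial's death, so the next trial is a function of the graphical construction strictly after $T_{Y_{n+1}}$ and is independent of $\mathcal{F}_{T_{Y_{n+1}}}$ by the strong Markov property (Lemma \ref{bptsinf}), while Lemma \ref{Sk} guarantees the skipped sites cannot be break points, so the skipping procedure locates exactly $K_{1}$ (equation (\ref{Yinf})). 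Equally important, and absent from your sketch, is how the global conditioning on survival is reconciled with the i.i.d.\ claim: the paper runs the trial scheme for the a.s.-surviving half-line process $\zeta_{t}^{\bar{\eta}}$ and transfers it to $\zeta_{t}^{O}$ on the survival event via Lemma \ref{piprendcoup1} and Lemma \ref{KeqK'}, then proves the conditional identity (\ref{Xn}) by rewriting the conditioning event as in Lemma \ref{Xbdownii}. Saying that each trial, given $\tau_{k}<\infty$, succeeds with probability $\beta$ independently of the past does not by itself yield that every increment has the law of the first one under $\bar{\pr}$; making that precise is the actual content of Proposition \ref{PROPiid}, not bookkeeping that can be postponed.

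Two further points on the tail bounds. First, Proposition \ref{shadldev2} gives linear \emph{growth} of $\bar{r}_{t}$, i.e.\ an upper bound on hitting times (to be transferred to $r_{t}$ through the coupling of Lemma \ref{piprendcoup1}); your ``linear-in-$k$ lower bound on $\tau_{k}$'' points in the wrong direction for bounding $\tilde{\tau}_{1}$. Second, Proposition \ref{expbounds3scp} controls the rightward reach $R$ of a dying trial, not the downward excursion of $r_{t}$; a reflection-symmetry argument can bound the leftward reach of each failed trial, but summing per-failure excursions still misses the drop of the right edge during the intervals between the death of one trial and the start of the next (where $\bar{r}_{t}$ is not tracked by any trial). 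The paper avoids this by bounding $M_{0}$ directly: given the exponential bound on $T_{Y_{N}}$, a drop below $-x$ would have to be undone by more than $x$ unit up-jumps of the right edge, which occur at rate at most $\mu$, within time $x/\mu$, an event of exponentially small probability.
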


\subsection{Break points}\label{S31} 

We first define our break points.

\definition{Consider the graphical construction for $(\lambda, \mu)$ such that $\mu >\mu_{c}$ and $\mu\geq\lambda>0$. 
Consider $\zeta_{t}^{O}$, define $r_{t} =\sup \mathcal{I}(\zeta_{t}^{O})$, define also the stopping times $\tau_{k} = \inf\{t: r_{t}=k\}$, $k\geq0$. Let $\eta_{k}$ be as in Definition \ref{defetak}. Our break points, which we are about to define, is the unique strictly increasing, in space and in time, subsequence of the space-time points $k \times \tau_{k}, k\geq1,$ such that  $\zeta_{t}^{[\eta_{k},\tau_{k}]} \textup{ survives}$. The origin $0 \times 0$ is a break point, i.e.\ our subsequence is identified on $\{\zeta_{t}^{O} \textup{ survives}\}$. Define $(K_{0}, \tau_{K_{0}})= (0, 0)$. For all $n\geq0$ and $K_{n}<\infty$ we inductively define
\[
K_{n+1} = \inf\{k \geq K_{n}+1:  \zeta_{t}^{[\eta_{k},\tau_{k}]} \mbox{ survives}\}, 
\]  
and $X_{n+1} = K_{n+1}-K_{n}$, additionally we define $\Psi_{n+1} = \tau_{K_{n+1}} - \tau_{K_{n}}$, and also
$\displaystyle{M_{n} =K_{n} -\inf_{\tau_{K_{n}} \leq t < \tau_{K_{n+1}}} r_{t}}$. The space-time points $K_{n} \times \tau_{K_{n}}$, $n\geq0$, are our  \textit{break points}.}\label{definbpts}

Letting $\tilde{\tau}_{n}:= \tau_{K_{n}}, n\geq0$, in the definition of break points above gives us that for proving Theorem $\ref{THEprop}$ it is sufficient to prove the two propositions following; this subsection is intended for proving these.

\begin{prop}\label{PROPexpbnd}
$K_{1}$, $\tau_{K_{1}}$ and $M_{0}$ are exponentially.
\end{prop}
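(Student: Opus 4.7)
Set $\beta := \pr(\zeta_t^{\eta_0}\textup{ survives})$, which is strictly positive by Proposition \ref{couplDS} (once one notes that $\pr(\zeta_T^{\eta_0} \in Z_0) > 0$ for $T$ sufficiently large, so the single-site process enters $Z_0$ and then survives with the percolation probability). By the strong Markov property at $\tau_k$ combined with translation invariance of the graphical construction, on $\{\tau_k < \infty\}$
\[
\pr(S_k \mid \mathcal{F}_{\tau_k}) = \beta, \qquad S_k := \{\zeta_t^{[\eta_k, \tau_k]} \textup{ survives}\}.
\]
Note also that $\{K_1 < \infty\}$ coincides up to null sets with $\{\zeta_t^O \textup{ survives}\}$: one direction is Lemma \ref{Sk}, while the converse is a by-product of the exponential bound below (combined with the observation that $\{R < \infty\}$ forces the process onto a finite site set, on which the all-$0$ configuration is absorbing).

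The main task, and the main obstacle, is bounding $\pr(K_1 > n, K_1 < \infty)$. We have $\{K_1 > n, \tau_n < \infty\} = \bigcap_{k=1}^{n} S_k^c \cap \{\tau_n < \infty\}$; a naive independence argument would give $(1 - \beta)^n$, but the $S_k^c$ share graphical construction beyond $\tau_n$ and by FKG are positively correlated (the wrong direction). I would resolve this using the block renormalization of Proposition \ref{couplDS} together with a spatial sub-sampling argument. Fix $L, T$ with $p$ close to $1$ from Proposition \ref{couplDS}, and choose $C$ large enough that the spread of $\zeta^{[\eta_k, \tau_k]}$ leaves $[k - CT, k + CT]$ before time $\tau_k + T$ with conditional probability at most $e^{-\gamma_1 T}$ (standard Poisson bound on the graphical construction). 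Define
\[
\tilde E_k := \{\zeta_{\tau_k + T}^{[\eta_k, \tau_k]} \in Z_k\} \cap \{\textup{no such escape}\},
\]
so that $\tilde E_k$ is measurable with respect to graphical construction in $[k - CT, k + CT] \times [\tau_k, \tau_k + T]$ and satisfies $\pr(\tilde E_k \mid \mathcal{F}_{\tau_k}) \geq \delta/2$ for some $\delta > 0$. Sub-sampling $k_j = jK$ with $K = 2CT + 1$ makes the corresponding spatial strips disjoint, so $\{\tilde E_{k_j}\}_j$ become i.i.d.\ Bernoulli$(\geq \delta/2)$. On $\tilde E_{k_j}$, the coupled oriented percolation of Proposition \ref{couplDS} survives with conditional probability $\geq q$ close to $1$, forcing $S_{k_j}$; a geometric-tail argument then gives $\pr(K_1 > n, K_1 < \infty) \leq C_0 \alpha^{n}$ for some $\alpha \in (0, 1)$.

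For $\tau_{K_1}$, take $a > 0$ as in Proposition \ref{shadldev2} and split
\[
\pr(\tau_{K_1} > t, K_1 < \infty) \leq \pr(K_1 > at, K_1 < \infty) + \pr(r_t < at, \zeta_t^O \textup{ survives}).
\]
The first summand is exponentially small in $t$ by the preceding step. For the second, Lemma \ref{piprendcoup1} identifies $r_t$ with $\bar r_t$ on $\{I_t \neq \emptyset\}$, so the probability is at most $\pr(\bar r_t < at) \leq Ce^{-\gamma t}$ by Proposition \ref{shadldev2}.

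For $M_0$, observe that on $\{M_0 > m\}$ the rightmost $r$ starts at $0$, dips below $-m$, and reaches $K_1 \geq 1$ at time $\tau_{K_1}$, so it performs at least $m + 2$ upward unit jumps during $[0, \tau_{K_1}]$. Each upward jump consumes a $\lambda$-arrow from the current rightmost to its right neighbor, which fires at rate $\lambda$; hence the number of upward jumps of $r$ in a window of length $s$ is stochastically dominated by $\textup{Poisson}(\lambda s)$. Consequently
\[
\pr(M_0 > m, K_1 < \infty) \leq \pr\!\bigl(\tau_{K_1} > \tfrac{m}{2\lambda},\, K_1 < \infty\bigr) + \pr\!\bigl(\textup{Poisson}(\tfrac{m}{2}) \geq m + 2\bigr),
\]
both exponentially small in $m$ by the preceding step and standard Chernoff bounds respectively.
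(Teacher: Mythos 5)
Your treatments of $\tau_{K_1}$ and $M_0$ are essentially the paper's own arguments (split on $\{K_1>at\}$ and use Proposition \ref{shadldev2} via the coupling of Lemma \ref{piprendcoup1}; split on $\{\tau_{K_1}>m/(2\cdot\text{rate})\}$ and use a Poisson domination of the upward jumps), with one small slip: an upward jump of $r_t$ is \emph{not} always a $\lambda$-arrow, because when $r_t$ sits below its running maximum the site $r_t+1$ may be in state $0$ and is then reinfected at rate $\mu$; the correct domination is by a rate-$\mu$ Poisson process, exactly as the paper uses, and this is harmless. The genuine gap is in the centrepiece, the exponential tail of $K_1$. You correctly identify the obstacle (the events $S_k^c$ are determined by the infinite future and are positively correlated, so no naive product bound), but your proposed fix does not remove it: sub-sampling $k_j=jK$ makes the finite-box events $\tilde E_{k_j}$ (conditionally) independent --- and even that should be argued through the strong Markov property at the stopping times $\tau_{k_j}$, since the time windows $[\tau_{k_j},\tau_{k_j}+T]$ are random, not through spatial disjointness alone --- but $\tilde E_{k_j}$ only yields $S_{k_j}$ with conditional probability $q<1$. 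The residual failure events $\tilde E_{k_j}\cap S_{k_j}^c$ are again survival-type events of coupled percolation processes living on overlapping, unbounded space-time regions; their intersection over many $j$ is exactly as intractable (and FKG-correlated in the unhelpful direction) as the original $\bigcap_k S_k^c$, so the sentence ``a geometric-tail argument then gives'' is precisely the missing step, not a routine one. A related soft spot is your claim that $\{R<\infty\}$ forces the process onto a finite site set: it does not, since the infection may still spread leftward.

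The paper resolves this differently, via a sequential restart scheme (Lemma \ref{bptsinf}) for the half-line process $\zeta^{\bar\eta}_t$: a new single-site trial is launched only \emph{after} the previous trial has died out, and at a site strictly to the right of everything that trial ever touched, namely $Y_{n+1}=1+\sup_{t\in[T_{Y_n},\rho_n)}\bar r_t$. Consequently the failure of trials $1,\dots,n$ is measurable with respect to $\mathcal{F}_{T_{Y_{n+1}}}$, and the strong Markov property gives an \emph{exact} geometric number of trials $N$, each failed trial contributing a spatial overshoot distributed like $R+1$ on extinction, which is exponentially bounded by Proposition \ref{expbounds3scp}; Lemma \ref{Sk} guarantees that the skipped intermediate sites cannot survive, and Lemma \ref{KeqK'} identifies $(K_1,\tau_{K_1},M_0)$ with $(Y_N,T_{Y_N},-\inf_{t\le T_{Y_N}}\bar r_t)$ on the survival event (this also settles $K_1<\infty$ a.s.\ on survival, which you left hanging). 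So $K_1$ is a geometric sum of i.i.d.\ exponentially bounded increments, and the tail bound follows without ever needing joint control of infinitely-many survival events. If you want to salvage your block approach, you would effectively have to localize failure as well --- i.e.\ replace $S_k$ by ``survive, or die out within an exponentially bounded space-time box'' --- which is the content of Proposition \ref{expbounds3scp} and leads you back to the restart construction.
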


\begin{prop}\label{PROPiid}
$(X_{n}, \Psi_{n} ,M_{n-1})_{n \geq 1}$, are independent identically distributed vectors.
\end{prop}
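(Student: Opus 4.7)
The plan is to set up a regeneration argument at each break point, building on the coupling results of Section~\ref{S2} and the translation invariance of the Poisson graphical construction.

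\textbf{Step 1 (Reduction to a fresh copy).} Since $\tau_{K_n}$ is the first time the rightmost of $\zeta^{O}$ reaches $K_n$, all sites $x > K_n$ are still in state $-1$ at time $\tau_{K_n}$; thus $\zeta^{O}_{\tau_{K_n}}$ and $\eta_{K_n}$ agree on $[K_n, \infty)$. A direct time- and space-translated version of the argument in the proof of Lemma~\ref{piprendcoup1} then shows that the rightmost of $\zeta^{O}$ for $t \geq \tau_{K_n}$ coincides with the rightmost of $\zeta_t^{[\eta_{K_n}, \tau_{K_n}]}$ on $\{\zeta^{O} \textup{ survives}\}$. Together with Lemma~\ref{Sk} (which controls what happens to the left of $K_n$), this identifies $(X_{n+1}, \Psi_{n+1}, M_n)$ with $F\bigl((\zeta^{[\eta_{K_n}, \tau_{K_n}]}_t)_{t \geq \tau_{K_n}}\bigr)$, where $F$ is the same measurable functional that produces $(X_1, \Psi_1, M_0)$ when applied to $(\zeta^O_t)_{t \geq 0}$.

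\textbf{Step 2 (Equidistribution).} By the joint spatial and temporal translation invariance of the Poisson processes underlying the graphical construction, the shifted process $\bigl(\zeta^{[\eta_{K_n}, \tau_{K_n}]}_{s + \tau_{K_n}}(\cdot + K_n)\bigr)_{s \geq 0}$ has the same law as $\bigl(\zeta^O_s(\cdot)\bigr)_{s \geq 0}$. Combined with Step~1, this yields that the conditional distribution of $(X_{n+1}, \Psi_{n+1}, M_n)$ given survival of $\zeta^{[\eta_{K_n}, \tau_{K_n}]}$ equals the conditional distribution of $(X_1, \Psi_1, M_0)$ given $\{\zeta^O \textup{ survives}\}$.

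\textbf{Step 3 (Independence --- the main obstacle).} The technical crux is that $\tau_{K_n}$ is \emph{not} a stopping time of the natural filtration of the graphical construction: verifying that $K_n$ is a break point requires checking survival of $\zeta^{[\eta_{K_n}, \tau_{K_n}]}$, which is a future-measurable event. To circumvent this, I would condition on $(K_n, \tau_{K_n}) = (k, t)$ for deterministic $(k, t)$ and invoke the Markov property of the Poisson processes at the deterministic time $t$, so that the post-$t$ graphical construction is independent of the pre-$t$ one. The event $\{K_n = k, \tau_{K_n} = t\}$ decomposes into a pre-$t$ measurable part (the rightmost of $\zeta^O$ first reaches $k$ at time $t$ with the earlier break points already identified) and a post-$t$ measurable part (survival of $\zeta^{[\eta_k, t]}$ together with eventual extinction of each candidate process $\zeta^{[\eta_j, \tau_j]}$, $j < k$; the extinction times are a.s.\ finite and in fact exponentially bounded by Proposition~\ref{expbounds3scp}, which allows a controlled enlargement of the conditioning). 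Since by Step~1 the excursion $(X_{n+1}, \Psi_{n+1}, M_n)$ is a functional of only the post-$t$ graphical construction, its conditional law given the pre-$t$ history depends solely on the post-$t$ structure, which gives the required independence. An induction on $n$, together with the equidistribution from Step~2, then yields the full i.i.d.\ property.
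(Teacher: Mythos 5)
Your Steps 1 and 2 are sound and are essentially the paper's own route (they amount to Lemma~\ref{KeqK'} plus space-time translation invariance of the graphical construction). The gap is in Step 3. You place ``eventual extinction of each candidate process $\zeta_{t}^{[\eta_{j},\tau_{j}]}$, $j<k$'' into the \emph{post}-$t$ measurable part of the conditioning. This is not correct as stated: those processes are started at times $\tau_{j}<t$, so whether they ever die out depends on the construction both before and after time $t$. More seriously, if the conditioning event really did constrain the post-$t$ construction by anything beyond survival of $\zeta_{t}^{[\eta_{k},t]}$, then the conditional law of your functional $F$ of the post-$t$ construction would \emph{not} equal the law of $(X_{1},\Psi_{1},M_{0})$ given $\{\zeta_{t}^{O}\mbox{ survives}\}$, and the exact i.i.d.\ structure would be lost. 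Your proposed remedy --- a ``controlled enlargement of the conditioning'' using the exponential bound of Proposition~\ref{expbounds3scp} --- can at best produce approximate independence, which is not what the proposition asserts.

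The missing ingredient is the structural fact proved in the paper via Lemma~\ref{bptsinf} and packaged as Lemma~\ref{Xbdownii}: the failed candidates between consecutive break points are all already extinct by the time the next break point is reached. Indeed, one examines candidates sequentially; when the current candidate's process dies at time $\rho_{n}<\infty$, the next candidate $Y_{n+1}$ is taken to be one plus the running maximum of the rightmost up to $\rho_{n}$, so its hitting time $T_{Y_{n+1}}$ is at least $\rho_{n}$, and by Lemma~\ref{Sk} the processes attached to the skipped sites strictly between $Y_{n}$ and $Y_{n+1}$ die even earlier than $\rho_{n}$. Hence, on the event $\{K_{n}=k,\ \tau_{K_{n}}=t\}$ together with the earlier break-point data and survival, every extinction demanded by the definition of the break points has occurred \emph{before} time $t$, and the whole conditioning event factors exactly as $\{A,\ \tau_{k}=t\}\cap\{\zeta_{t}^{[\eta_{k},t]}\mbox{ survives}\}$ with $A\in\mathcal{F}_{t}$; this is precisely equation (\ref{indXn}). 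Only then does independence of the Poisson marks before and after time $t$, combined with Lemma~\ref{KeqK'} and translation invariance, yield the exact identity (\ref{Xn}) and hence the i.i.d.\ property. Without this sequential-extinction observation, your Step 3 does not close.
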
 

\definition{Given a configuration $\zeta$ and an integer $y\geq1$, define the configuration $\zeta-y$ to be $(\zeta-y)(x) = \zeta(y+x)$, for all $x \in \Z$.}

We shall denote by $\mathcal{F}_{t}$ the sigma algebra associated to the ensemble of Poisson processes used for producing the graphical construction up to time $t$.

The setting of the following lemma is important to what follows.

\begin{lem}\label{bptsinf}
Let  $\bar{\eta}$ such that $\bar{\eta}(x) =1$ for all $x \leq 0$ while $\bar{\eta}(x) =-1$ otherwise. Consider $\zeta^{\bar{\eta}}_{t}$ with parameters $(\lambda, \mu)$. Define $\bar{r}_{t} =\sup\mathcal{I}(\zeta^{\bar{\eta}}_{t})$, define also, the stopping times $T_{n} = \inf\{t: \bar{r}_{t} = n\}$, $n\geq0$. Let  $(\lambda,\mu)$ be such that $\mu \geq \lambda>0$ and $\mu > \mu_{c}$ and consider the graphical construction. 

Let $Y_{1}:=1$ and consider $\zeta_{t}^{1}:=\zeta_{t}^{[\eta_{Y_{1}}, T_{1}]}$, we let $\rho_{1} = \inf\{ t\geq T_{1}: \mathcal{I}(\zeta_{t}^{1}) = \emptyset\}$. 
For all $ n\geq 1$, proceed inductively: On the event $\{\rho_{n} < \infty\}$ let
\[
Y_{n+1} = 1+ \sup_{t \in [T_{Y_{n}},\rho_{n})} \bar{r}_{t}, 
\]
and consider $\zeta_{t}^{n+1}:= \zeta_{t}^{[\eta_{Y_{n+1}},T_{Y_{n+1}}]}$, we let $\rho_{n+1} = \inf\{t \geq T_{Y_{n+1}}: \mathcal{I}(\zeta_{t}^{n+1}) = \emptyset\}$; on the event that $\{\rho_{n} = \infty\}$ let $\rho_{l} = \infty$ for all $l > n$. Define the random variable $N = \inf\{n\geq1: \rho_{n}= \infty\}$. We have the following expression,
\begin{equation}\label{Yinf}
Y_{N} = \inf\{k\geq1: \zeta_{t}^{[\eta_{k}, T_{k}]} \mbox{ \textup{survives}}\},
\end{equation}
and also,
\begin{equation}\label{eq:algopiprendcoup}
\bar{r}_{t} = \sup\mathcal{I}(\zeta_{t}^{n}), \mbox{ for all } t \in [T_{Y_{n}},\rho_{n}) \mbox{ and } n\geq1.
\end{equation}

We further have that
\begin{equation}\label{cbpts1}
(\zeta^{1}_{t+ T_{1}}- 1)_{t \geq 0} \mbox{ is independent of }\mathcal{F}_{T_{1}}   \mbox{ and is equal in distribution to } (\zeta_{t}^{O})_{t\geq0},
\end{equation}
and also, 
\begin{eqnarray}\label{cbpts2}
&& \mbox{ conditional on } \{\rho_{n}<\infty, Y_{n+1} = w\}, w \geq 1, (\zeta^{n+1}_{t+ T_{Y_{n+1}}}- w)_{t \geq 0} \nonumber\\
&& \mbox{ is independent of } \mathcal{F}_{T_{Y_{n+1}}} \mbox{ and is equal in distribution to } (\zeta_{t}^{O})_{t\geq0}.
\end{eqnarray} 
\end{lem}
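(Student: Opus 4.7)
The plan is to prove the four assertions in the order (\ref{eq:algopiprendcoup}), (\ref{Yinf}), (\ref{cbpts1}), (\ref{cbpts2}). The coupling identity (\ref{eq:algopiprendcoup}) is the cornerstone: once it is in place, (\ref{Yinf}) follows by a monotonicity sandwich, while the distributional statements are consequences of the strong Markov property of the Poisson ensemble and the spatial stationarity of the graphical construction.

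For (\ref{eq:algopiprendcoup}) I would argue by induction on $n$. At time $T_{Y_n}$, the definition of $T_{Y_n}$ together with the nearest neighbour mechanism forces $\bar r_{t}\le Y_{n}-1$ for every $t<T_{Y_n}$, so no site $x>Y_n$ has ever been in state $1$ in $\zeta_t^{\bar\eta}$; since $\bar\eta(x)=-1$ for $x\ge 1$, this gives $\zeta^{\bar\eta}_{T_{Y_n}}(x)=-1=\eta_{Y_n}(x)$ for every $x>Y_n$, while $\zeta^{\bar\eta}_{T_{Y_n}}(Y_n)=1=\eta_{Y_n}(Y_n)$. Therefore the two configurations agree on $\{x\ge Y_n\}$, and moreover $\zeta^{\bar\eta}_{T_{Y_n}}\ge\eta_{Y_n}=\zeta^{n}_{T_{Y_n}}$ pointwise. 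The argument of Lemma \ref{piprendcoup1}, applied with the common starting time shifted to $T_{Y_n}$ and the reference site shifted to $Y_n$, then shows that $\zeta^{\bar\eta}_{t}(x)=\zeta^{n}_{t}(x)$ for every $x\ge \inf\mathcal{I}(\zeta^{n}_{t})$ and every $t\in[T_{Y_n},\rho_n)$; taking suprema yields (\ref{eq:algopiprendcoup}).

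To deduce (\ref{Yinf}) it suffices to check that $\zeta_{t}^{[\eta_{k},T_{k}]}$ dies out for every $1\le k<Y_N$. If $k=Y_n$ with $n<N$, this is the definition of $\rho_n<\infty$. If $Y_n<k<Y_{n+1}$ for some $n<N$, then by the defining formula for $Y_{n+1}$ and by (\ref{eq:algopiprendcoup}) we have $T_k\in[T_{Y_n},\rho_n)$ and $\sup\mathcal{I}(\zeta^{n}_{T_k})=k$; since $\eta_k$ is the least configuration with $k$ infected, $\zeta^{n}_{T_k}\ge\eta_k$, and monotonicity in the initial configuration (Theorem \ref{moninit1}) gives $\zeta^{n}_{t}\ge\zeta^{[\eta_{k},T_{k}]}_{t}$ for every $t\ge T_k$. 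Pointwise domination forces $\mathcal{I}(\zeta^{[\eta_{k},T_{k}]}_{t})\subseteq\mathcal{I}(\zeta^{n}_{t})$, and the right-hand side is empty at $\rho_n<\infty$, so the left-hand side is as well.

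Finally, for (\ref{cbpts1}) and (\ref{cbpts2}) I would invoke the strong Markov property of the Poisson ensemble driving the graphical construction at the stopping times $T_1$ and $T_{Y_{n+1}}$ respectively. At $T_{Y_{n+1}}$ the initial configuration of $\zeta^{n+1}$ is, by construction, exactly $\eta_{Y_{n+1}}$; composing the time translation $t\mapsto t-T_{Y_{n+1}}$ with the spatial translation $x\mapsto x-Y_{n+1}$ carries $\eta_{Y_{n+1}}$ to $\eta_{0}$ and, by spatial stationarity of the Poisson ensemble, carries the post-$T_{Y_{n+1}}$ graphical construction to an independent copy with the same law as the original, which delivers both the independence of $\mathcal{F}_{T_{Y_{n+1}}}$ and the asserted equality in distribution with $\zeta_{t}^{O}$. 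The conditional version follows from the observation that $\{\rho_n<\infty, Y_{n+1}=w\}$ is $\mathcal{F}_{\rho_{n}}$-measurable and $\rho_n<T_{Y_{n+1}}$. The main obstacle here is the bookkeeping needed to certify that $T_{Y_{n+1}}$ is genuinely a stopping time: $Y_{n+1}$ is itself random, but it is $\mathcal{F}_{\rho_{n}}$-measurable, and on $\{Y_{n+1}=w\}$ the time $T_{Y_{n+1}}$ coincides with the deterministic hitting time $\inf\{t:\bar r_{t}=w\}$, which is manifestly a stopping time.
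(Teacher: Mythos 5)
Your overall route is the same as the paper's: you obtain (\ref{eq:algopiprendcoup}) from the edge-coupling argument of Lemma \ref{piprendcoup1}, shifted in space to $Y_{n}$ and in time to $T_{Y_{n}}$; you obtain (\ref{Yinf}) from monotonicity in the initial configuration, in effect re-deriving Lemma \ref{Sk} for the processes $\zeta^{n}$; and you obtain (\ref{cbpts1})--(\ref{cbpts2}) from the strong Markov property of the Poisson ensemble at $T_{1}$ and $T_{Y_{n+1}}$ together with translation invariance, with a reasonable treatment of the measurability of $\{\rho_{n}<\infty,\,Y_{n+1}=w\}$ and of the identification of $T_{Y_{n+1}}$ with the genuine stopping time $T_{w}$ on that event.

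The one genuine omission is the almost sure finiteness of the random times at which you apply the strong Markov property. For (\ref{cbpts1}) and (\ref{cbpts2}) to hold as stated you need $T_{1}<\infty$ a.s. and, conditionally on $\{\rho_{n}<\infty,\,Y_{n+1}=w\}$, $T_{w}<\infty$ a.s.; otherwise the post-$T$ processes are not even defined on a set of positive probability and the unconditional independence and equality in distribution fail to make sense. This is not a soft pathwise fact: nothing in your argument prevents the right edge $\bar{r}_{t}$ of the half-line process from never reaching level $w$ (this is exactly what happens for a subcritical half-line contact process, and here first infections of new sites occur only at the possibly small rate $\lambda$); ruling it out requires the supercriticality $\mu>\mu_{c}$. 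The paper supplies precisely this input by citing Proposition \ref{shadldev2}, which yields $T_{n}<\infty$ for all $n$ a.s., and Proposition \ref{expbounds3scp}, which yields $Y_{n+1}<\infty$ a.s. on $\{\rho_{n}<\infty\}$ so that $T_{Y_{n+1}}$ is the hitting time of a finite level. Once these finiteness statements are added, the rest of your proof goes through as written.
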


\begin{proof}

Equation $(\ref{Yinf})$ is a consequence of Lemma $\ref{Sk}$, to see this note that this lemma gives that for all $n\geq1$ on $\{\rho_{n}<\infty\}$, $\rho_{n} \geq \inf\{t\geq T_{k}: \mathcal{I}(\zeta_{t}^{[\eta_{k},T_{k}]})= \emptyset\}$ $\mbox{for all } k=Y_{n}+1,\dots,Y_{n+1}-1$. Equation $(\ref{eq:algopiprendcoup})$ is immediate due to Lemma $\ref{piprendcoup1}$.

Note that from Proposition $\ref{shadldev2}$ we have that $T_{n} < \infty \mbox{ for all }  n\geq0$ a.s.. Then, equation $(\ref{cbpts1})$ follows from the strong Markov property at time $T_{1}<\infty$ and translation invariance; while $(\ref{cbpts2})$ is also immediate by applying the strong Markov property at time $T_{Y_{n+1}}<\infty$, where $T_{Y_{n+1}}<\infty$ because from Proposition $\ref{expbounds3scp}$ we have that, conditional on $\rho_{n}<\infty$,  $Y_{n+1} < \infty$ $\mbox{a.s.}$.
\end{proof}

The connection between the break points and Lemma $\ref{bptsinf}$ comes by the following coupling result.
 
\begin{lem}\label{KeqK'}
Let $\eta'$ be any configuration such that $\eta'(0)= 1$ and $\eta'(x)= -1$ for all $x\geq1$. Consider   $\zeta_{t}^{\eta'}$ with parameters $(\lambda,\mu)$ and let $r_{t}'= \sup\mathcal{I}(\zeta_{t}^{\eta'})$, let also $\tau'_{k} = \inf\{t\geq 0: r'_{t} = k\}$, $k\geq1$. Define the integers 
\[
K' = \inf\{k\geq 1: \zeta_{t}^{[\eta_{k}, \tau'_{k}]} \mbox{\textup{ survives}}\}, 
\]
and also $M'= \inf_{0 \leq t \leq \tau_{K}'} r_{t}'.$ Consider further $\zeta_{t}^{O}$  with parameters $(\lambda,\mu)$. For $(\lambda,\mu)$ such that $\mu \geq \lambda>0$ and $\mu >\mu_{c}$, if $\zeta_{t}^{O}$ and $\zeta_{t}^{\eta'}$ are coupled by the graphical construction the following property holds,
\begin{equation*}\label{coupK1tauK1}
 (K', \tau'_{K'},M') = (K_{1}, \tau_{K_{1}}, M_{0}), \mbox{ on  } \{\zeta_{t}^{O} \mbox{\textup{ survives}}\},
\end{equation*} 
where $K_{1}, \tau_{K_{1}}, M_{0}$ are as in Definition $\ref{definbpts}$. 
\end{lem}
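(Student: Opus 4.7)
The claim is essentially a bookkeeping consequence of Lemma \ref{piprendcoup1}; the plan is to use that lemma to identify the rightmost-infected trajectories of $\zeta_t^O$ and $\zeta_t^{\eta'}$ on the survival event, and then to observe that the two triples $(K_1,\tau_{K_1},M_0)$ and $(K',\tau'_{K'},M')$ reduce to the same functionals of the same random data.

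First I would note that on $\{\zeta_t^O \text{ survives}\}$ one has $I_t \neq \emptyset$ for every $t \geq 0$, so Lemma \ref{piprendcoup1} yields $r_t = r'_t$ for all $t \geq 0$, and in particular $\tau_k = \tau'_k$ for every $k \geq 0$. Next, the process $\zeta_t^{[\eta_k, s]}$ is built from the graphical construction data at times $t \geq s$ together with the imposed initial condition $\eta_k$ at time $s$; so on $\{\tau_k = \tau'_k\}$ the two processes $\zeta_t^{[\eta_k, \tau_k]}$ and $\zeta_t^{[\eta_k, \tau'_k]}$ are literally the same path-by-path. Consequently the survival events
\[
\{\zeta_t^{[\eta_k, \tau_k]} \text{ survives}\} \quad \text{and} \quad \{\zeta_t^{[\eta_k, \tau'_k]} \text{ survives}\}
\]
coincide for each $k$ on $\{\zeta_t^O \text{ survives}\}$, so taking the first $k \geq 1$ at which survival occurs yields $K_1 = K'$, and then $\tau'_{K'} = \tau'_{K_1} = \tau_{K_1}$ follows at once.

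Finally, for the remaining coordinate I use that $M_0$ and $M'$ are each determined by the restriction of $r_\cdot$ (respectively $r'_\cdot$) to the interval up to $\tau_{K_1} = \tau'_{K'}$; since these restrictions agree on $\{\zeta_t^O \text{ survives}\}$, the equality $M_0 = M'$ is immediate. There is no genuine obstacle here: the only point requiring care is that $\zeta_t^{[\eta_k, \tau_k]}$ depends on the graphical construction only through arrows and recovery marks at times $t \geq \tau_k$ and not on the "parent" process whose rightmost infected first reached $k$, so replacing $\tau_k$ by $\tau'_k$ in the common family is legitimate on the event $\{\tau_k = \tau'_k\}$ supplied by Lemma \ref{piprendcoup1}.
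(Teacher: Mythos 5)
Your proposal is correct and follows exactly the route the paper takes: the paper simply declares Lemma \ref{KeqK'} an immediate consequence of Lemma \ref{piprendcoup1}, and your write-up fills in precisely the intended details (identification of $r_t$ with $r'_t$ on survival, hence $\tau_k=\tau'_k$, hence the pathwise identity of the processes $\zeta_t^{[\eta_k,\tau_k]}$ and $\zeta_t^{[\eta_k,\tau'_k]}$ and of the resulting triples). No changes needed.
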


The proof of Lemma \ref{KeqK'} is trivial, it is an immediate consequence of Lemma $\ref{piprendcoup1}$.

\begin{proof}[proof of Proposition $\ref{PROPexpbnd}$]
Consider the setting of Lemma $\ref{bptsinf}$. By the definition of break points, Definition \ref{definbpts}, and Lemma $\ref{KeqK'}$ we have that on $\{\zeta_{t}^{O} \mbox{ survives}\}$, $K_{1}=Y_{N}$, $\tau_{K_{1}}=  T_{Y_{N}}$ and $M_{0}=\inf_{t \leq T_{Y_{N}}}\bar{r}_{t}$. It is thus sufficient to prove that the random variables $Y_{N}, T_{Y_{N}}, \inf_{t \leq T_{Y_{N}}}\bar{r}_{t}$ are exponentially bounded, merely because an exponentially bounded random variable is again exponentially bounded conditional on any set of positive probability.

We have that 
\begin{equation}\label{eq:YN}
Y_{N} = 1+ \sum\limits_{n=2}^{N} (Y_{k} - Y_{k-1}) \mbox{ on } \{N\geq2\},
\end{equation} 
while $Y_{1}:=1$, using this and Proposition \ref{expbounds3scp}, we will prove that $Y_{N}$ is bounded above in distribution by a geometric sum of $\mbox{i.i.d.}$ exponentially bounded random variables and hence is itself exponentially bounded. 

Let $\rho$ and $R$ be as in Proposition \ref{expbounds3scp}, we define $p_{R}(w)= \pr(R +1 = w, \rho<\infty)$, and $\bar{p}_{R}(w)= \pr(R +1= w | \mbox{ }\rho <\infty)$, for all integer $w\geq1$, define also $p= \pr(\rho = \infty)>0$ and $q=1-p$, where $p>0$ by Proposition \ref{couplDS}.

By $(\ref{cbpts1})$ of the statement of Lemma $\ref{bptsinf}$, we have that
\begin{equation}\label{eq:Y2}
\pr( Y_{2} -Y_{1} = w, \rho_{1} < \infty) = p_{R}(w) 
\end{equation}
$w\geq1$; similarly, from $(\ref{cbpts2})$ of the same statement, we have that, for all $n\geq1$,  
\begin{equation}\label{eq:rec1}
\pr(\rho_{n+1} = \infty| \mbox{ } \rho_{n} <\infty, Y_{n+1} = z, \mathcal{F}_{T_{Y_{n+1}}}) = p, 
\end{equation}
and also, 
\begin{equation}\label{eq:rec2}
\pr(Y_{n+1} - Y_{n} = w, \rho_{n}<\infty |\mbox{ } \rho_{n-1} <\infty, Y_{n}=z,\mathcal{F}_{T_{Y_{n}}}) = p_{R}(w) ,
\end{equation}
for all $w,z\geq1$.

Clearly $\{N=n\} = \{\rho_{k}<\infty \mbox{ for all } k =1,\dots,n-1 \mbox{ and } \rho_{n} = \infty\}$, $n\geq2$, 
and hence, 
$$
\displaylines{
\left\{\textstyle{\bigcap \limits_{n=1}^{m}} \{Y_{n+1}-Y_{n} = w_{n}\}, N=m+1\right\} = \hfill \cr
=  \left\{\textstyle{ \bigcap\limits_{n=1}^{m}} \{Y_{n+1}-Y_{n} = w_{n}, \rho_{n}<\infty\}, \rho_{m+1}=\infty\right\},}
$$
 for all $m\geq1$, using this, from $(\ref{eq:rec1})$, $(m-1)$ applications of $(\ref{eq:rec2})$, and $(\ref{eq:Y2})$, since $p_{R}(w)= q\bar{p}_{R}(w)$, we have that
\begin{equation*}\label{eq:prod-ind}
\pr\left(\bigcap \limits_{n=1}^{m} \{Y_{n+1}-Y_{n} = w_{n}\}, N=m+1\right)= p q^{m} \prod \limits_{n=1}^{m} \bar{p}_{R}(w_{n}),  
\end{equation*}
for all $m\geq1$ and $w_{n}\geq1$.  From the last display and $(\ref{eq:YN})$, due to Proposition \ref{expbounds3scp}, we have that $Y_{N}$ is exponentially bounded by an elementary conditioning argument as follows. Letting $(\tilde{\rho}_{k}, \tilde{R}_{k}), k\geq1$ be independent pairs of random variables each of which is distributed as $(\rho,R)$ and the geometric random variable $\tilde{N} := \inf\{n\geq1: \tilde{\rho_{n}} = \infty\}$, we have that $Y_{N}$ is equal in distribution to $\sum \limits_{k=0}^{\tilde{N}-1} \tilde{R_{k}}$, $\tilde{R}_{0}:=1$.

%, on the same probability space equipped with a probability measure $\tilde{\pr}$,
%[We give an explicit proof of how this can be done. Consider the family of independent pairs of $\mbox{r.v.'s}$ $\{ (\tilde{\rho}_{k}, \tilde{R}_{k}), k\geq1\}$ each of which is distributed as $(\rho,R)$ on the same probability space equipped with a probability measure $\tilde{\pr}$, as explained in $\mbox{Chpt.}$ 8 of Williams (1991). 
%Define also $\tilde{N} = \inf\{n\geq1: \tilde{\rho_{n}} = \infty\}$.
%Using $(\ref{eq:prod-ind})$ we have that for all $n\geq2$,  
%$$
%\displaylines{
%\pr \left( \sum \limits_{k=2}^{n} (Y_{k}-Y_{k-1}) =w-1, N=n \right) = \hfill \cr  
%= pq^{n-1} \tilde{\pr} \left( \sum \limits_{k=1}^{n-1} (\tilde{R}_{k} + 1) =w-1 \vline \mbox{ }\bigcap\limits_{k=1}^{n-1} \{\tilde{\rho}_{k}<\infty\}\right), }
%$$
%where the latter is true for $n=2$ by elementary conditioning, while for general $n\geq2$ comes by iteration.
%Thus from $(\ref{eq:YN})$ and because $\pr(N=1) = p$, defining $R_{0}:=1$, the last display gives that 
%\begin{eqnarray*}
%\pr(Y_{N} = w) &=& pI_{\{w=1\}} + \sum \limits_{n\geq 2} \pr\left( \sum \limits_{k=2}^{n} (Y_{k}-Y_{k-1}) =w-1, N=n\right) \\
%& = & \tilde{\pr}\left( \sum \limits_{k=0}^{\tilde{N}-1} \tilde{R_{k}} = w\right), 
%\end{eqnarray*}
%however because by Lemma $\ref{lem4pip}$ we have that $\tilde{R}_{k}, k=0,\dots,\tilde{N}-1$ are exponentially bounded, 
%by further conditioning on $\tilde{N}$, it is easy to see that the proof is complete.]

We proceed to prove that $T_{Y_{N}}$ and $\inf_{t \leq T_{Y_{N}}}\bar{r}_{t}$ are exponentially bounded random variables. By $(\ref{Yinf})$, letting $\bar{x}_{t} = \sup_{s \leq t}\bar{r}_{s}$, we have that $\{T_{Y_{N}} > t\}= \{\bar{x}_{t} \leq Y_{N}\}$; from this and set theory we have that, for any $a>0$
\begin{eqnarray}\label{bpexm}
\pr(T_{Y_{N}} > t) &=& \pr(\bar{x}_{t} \leq Y_{N}) \nonumber\\
&\leq& \pr(\bar{x}_{t} < at) + \pr(\bar{x}_{t}\geq at, \bar{x}_{t} \leq Y_{N}) \nonumber\\
&\leq& \pr(\bar{x}_{t} < at) + \pr(Y_{N} \geq \lfloor at \rfloor),
\end{eqnarray}
for all $t\geq0$, where $\lfloor \cdot\rfloor$ is the floor function; choosing $a>0$ as in Proposition $\ref{shadldev2}$, because $\bar{x}_{t} \geq \bar{r}_{t}$, and since $Y_{N}$ is exponentially bounded, we deduce by (\ref{bpexm}) that  $T_{Y_{N}}$ is exponentially bounded as well.

Finally, we prove that $M:= \inf_{t \leq T_{Y_{N}}}\bar{r}_{t}$ is exponentially bounded. 
From set theory, 
\[
\pr(M < -x) \leq \pr \left(T_{Y_{N}} \geq \frac{x}{\mu} \right) + \pr\left(T_{Y_{N}} < \frac{x}{\mu}, \{ \bar{r}_{s} \leq -x \mbox{ for some } s \leq T_{Y_{N}}\}\right), 
\]
because $T_{Y_{N}}$ is exponentially bounded, it is sufficient to prove that the second term of the right hand side decays exponentially. However, recall that $\bar{r}_{_{T_{Y_{N}}}} \geq 1$, hence, 
$$\displaylines{ \pr\left(T_{Y_{N}} < \frac{x}{\mu}, \{ \bar{r}_{s} \leq -x \mbox{ for some } s \leq T_{Y_{N}}\}\right) \leq \hfill \cr 
\leq \pr\left((\bar{r}_{t}- \bar{r}_{s}) > x \mbox{ for some } s  \leq \frac{x} {\mu} \mbox{ and } t  \leq \frac{x} {\mu} \right),}
$$
where the term on the right of the last display decays exponentially in $x$, because $(\bar{r}_{t} - \bar{r}_{s})$, $t >s$ is bounded above in distribution by $\Lambda_{\mu}(s,t]$, the number of events of a Poisson process at rate $\mu$ within the time interval $(s,t]$, by use of standard large deviations for Poisson processes, because $\Lambda_{\mu}(s,t] \leq \Lambda_{\mu}( 0,x / \mu]$ for any $s,t \in (0,x / \mu]$. 
\end{proof}

The next lemma is used in the proof of Proposition $\ref{PROPiid}$ following.
%the following equality of events, 
\begin{lem}\label{Xbdownii}
Consider the setting of the definition of break points, Definition \ref{definbpts}. For all $n\geq1$, we have that
\begin{equation}\label{indXn}
\big\{ \textstyle{ \bigcap \limits_{l=1}^{n} } \{(X_{l}, \Psi_{l}, M_{l-1}) = (x_{l}, t_{l}, m_{l-1}) \}, \zeta_{t}^{O} \textup{ survives}\big\} = \{ \zeta_{t}^{[\eta_{z_{n}},w_{n}]}\textup{ survives}, \tau_{z_{n}} = w_{n}, A \},
\end{equation}
for some event $A \in \mathcal{F}_{w_{n}}$, where $z_{n} = \sum\limits_{l=1}^{n} x_{l}$ and $w_{n} = \sum\limits_{l=1}^{n} t_{l}$.
\end{lem}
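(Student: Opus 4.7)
The plan is to decompose the LHS of \eqref{indXn} into the single future event $\{\zeta_t^{[\eta_{z_n},w_n]}\text{ survives}\}$ together with $\{\tau_{z_n}=w_n\}$ and an $\mathcal{F}_{w_n}$-measurable residue $A$. First I observe that on the LHS, specifying the $X_l$ and $\Psi_l$ forces $K_l=z_l$ and $\tau_{K_l}=\tau_{z_l}=w_l$ for every $l\leq n$; in particular, by the very definition of $K_n$ in Definition~\ref{definbpts}, $\zeta_t^{[\eta_{z_n},w_n]}$ survives, and the conditions $\{\tau_{z_l}=w_l\}$ and $\{M_{l-1}=m_{l-1}\}$ are all $\mathcal{F}_{w_n}$-measurable. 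The only contents of the LHS that are a priori not $\mathcal{F}_{w_n}$-measurable are the survival/death statements for the subsidiary processes: $\zeta_t^{[\eta_{z_l},w_l]}$ survives for each $l<n$, and $\zeta_t^{[\eta_k,\tau_k]}$ does not survive for each $k\in(z_{l-1},z_l)$, $l\leq n$.

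The crux is to show that, on the event $\{\zeta_t^{[\eta_{z_n},w_n]}\text{ survives},\ \tau_{z_n}=w_n\}$, each of these statements is equivalent to an $\mathcal{F}_{w_n}$-measurable statement, namely that the subsidiary process in question is respectively alive or extinct at time $w_n$. The ``only if'' half of each equivalence is immediate. For the ``if'' half, I would first establish an analogue of Lemma~\ref{piprendcoup1}, namely that $\sup\mathcal{I}(\zeta_t^{[\eta_k,\tau_k]})=r_t$ on the event that $\zeta_t^{[\eta_k,\tau_k]}$ is alive at $t$, for every $t\geq\tau_k$. Granted this, if the subsidiary process is alive at $w_n$ then $\zeta_{w_n}^{[\eta_k,\tau_k]}(z_n)=1$, hence $\zeta_{w_n}^{[\eta_k,\tau_k]}\geq\eta_{z_n}$; monotonicity in the initial configuration (Theorem~\ref{moninit1}) then gives $\zeta_t^{[\eta_k,\tau_k]}\geq\zeta_t^{[\eta_{z_n},w_n]}$ for all $t\geq w_n$, and survival of the latter propagates to the former.

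With this reduction in hand, I would define $A\in\mathcal{F}_{w_n}$ as the intersection of $\{\tau_{z_l}=w_l\}$ and $\{M_{l-1}=m_{l-1}\}$ for $l=1,\dots,n$, of $\{\zeta_t^{[\eta_{z_l},w_l]}\text{ is alive at }w_n\}$ for $l=1,\dots,n-1$, and of $\{\zeta_t^{[\eta_k,\tau_k]}\text{ is extinct by }w_n\}$ for each intermediate $k\in(z_{l-1},z_l)$, $l\leq n$. Finally, the substitution of $\{\zeta_t^O\text{ survives}\}$ on the LHS by $\{\zeta_t^{[\eta_{z_n},w_n]}\text{ survives}\}$ on the RHS is justified because, on $\{\tau_{z_n}=w_n\}$, Lemma~\ref{Sk} gives $\zeta_t^O\geq\zeta_t^{[\eta_{z_n},w_n]}$ for all $t\geq w_n$, so survival of the latter forces that of the former, and conversely $\{\zeta_t^O\text{ survives}\}$ together with $K_n=z_n$ forces $\zeta_t^{[\eta_{z_n},w_n]}$ to survive by the very definition of $K_n$.

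The main obstacle is the adaptation of Lemma~\ref{piprendcoup1} to the shifted starting data $(\eta_k,\tau_k)$. At the start time $\tau_k$ one has $\zeta_{\tau_k}^O(x)=\eta_k(x)$ for every $x\geq k$ (both equal $1$ at $k$, and both equal $-1$ for $x>k$ since $\tau_k$ is the first time $r_t$ reaches $k$, so no site $>k$ has ever been infected). The transition-by-transition preservation argument used in the proof of Lemma~\ref{piprendcoup1} then carries over, with the leftmost infected of $\zeta_t^{[\eta_k,\tau_k]}$ playing the role of $l_t$; this yields agreement of the two processes on the half-line to the right of that leftmost, hence equality of their rightmost infected whenever the subsidiary process is alive.
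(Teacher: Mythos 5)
Your argument is correct, but it takes a genuinely different route from the paper's. The paper does not work directly from Definition~\ref{definbpts}: it first notes that in the sequential construction of Lemma~\ref{bptsinf} the $n=1$ identity is essentially automatic, because each new candidate $Y_{j+1}$ is only examined after the previous subsidiary process has died and that death occurs before time $T_{Y_{j+1}}$, so on $\{(Y_{N},T_{Y_{N}},\inf_{t\leq T_{Y_{N}}}\bar{r}_{t})=(x_{1},t_{1},m_{0})\}$ all ``failure'' information is decided by time $t_{1}$ and only the survival of $\zeta_t^{[\eta_{x_{1}},t_{1}]}$ refers to the future; it then transfers this to $\zeta_t^{O}$ via the coupling Lemma~\ref{KeqK'} (replacing $\{\zeta_t^{O} \mbox{ survives}\}$ by $\{I_{t_{1}}\neq\emptyset\}$), and obtains general $n$ by iterating that display. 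You instead stay with $\zeta_t^{O}$ and Definition~\ref{definbpts} throughout, and your key step --- that on $\{\tau_{z_n}=w_n\}\cap\{\zeta_t^{[\eta_{z_n},w_n]}\mbox{ survives}\}$ survival of any earlier candidate $\zeta_t^{[\eta_k,\tau_k]}$, $k\leq z_n$, is equivalent to its being alive at time $w_n$ --- is proved via a time-shifted analogue of Lemma~\ref{piprendcoup1} (its rightmost coincides with $r_t$ while it is alive, so at time $w_n$ its configuration dominates $\eta_{z_n}$) combined with the restart/monotonicity mechanism of Theorem~\ref{moninit1} and Lemma~\ref{Sk}; this collapses every conditioning event except the final survival statement into $\mathcal{F}_{w_n}$, handling all $n$ in one stroke. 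Your route is more self-contained for this lemma (it needs neither Lemma~\ref{bptsinf} nor Lemma~\ref{KeqK'}) and makes explicit the measurability bookkeeping that the paper hides in the word ``trivially''; the paper's route buys brevity by reusing machinery it needs anyway for Proposition~\ref{PROPexpbnd}. Two small points: the shifted analogue of Lemma~\ref{piprendcoup1} does rely on your observation that at time $\tau_k$ no site $>k$ has ever been infected (so the two processes agree on $[k,\infty)$ at the start), and in the definition of $A$ the conditions $M_{l-1}=m_{l-1}$ should be recorded as the explicit conditions $\inf_{w_{l-1}\leq t<w_l}r_t=z_{l-1}-m_{l-1}$ once the $K_l$'s have been pinned down --- both of which are clearly what you intend.
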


\begin{proof}
Considering the setting of Lemma $\ref{bptsinf}$ we trivially have that 
\[
\{(Y_{N},T_{Y_{N}}, \inf_{t \leq T_{Y_{N}}}\bar{r}_{t}) = (x_{1}, t_{1}, m_{0})\} = \{\zeta_{t}^{[\eta_{x_{1}}, t_{1}]} \mbox{ survives}, T_{x_{1}} = t_{1}, B\},
\] 
for some event $B \in \mathcal{F}_{t_{1}}$; from this and Lemma $\ref{KeqK'}$ we have that 
$$\displaylines{  \{ (X_{1}, \Psi_{1}, M_{0}) = (x_{1}, t_{1}, m_{0}) ,\zeta_{t}^{O} \mbox{survives}\} \hfill \cr  
\hspace{7mm}=\{\zeta_{t}^{[\eta_{x_{1}},t_{1}]} \mbox{ survives},\tau_{x_{1}} = t_{1}, B, \zeta_{t}^{O} \mbox{survives}\} \cr 
= \{\zeta_{t}^{[\eta_{x_{1}},t_{1}]} \mbox{ survives},\tau_{x_{1}} = t_{1}, B, I_{t_{1}} \not=\emptyset\}
}
$$
for all $x_{1}\geq 1$, $t_{1} \in \R_{+}$, $m_{0} \geq0$, because $\{ I_{t_{1}} \not=\emptyset \} \in \mathcal{F}_{t_{1}}$ we have thus proved $(\ref{indXn})$ for $n=1$, for general $n\geq1$  the proof is derived by repeated applications of the last display.
\end{proof}

\begin{proof}[proof of Proposition $\ref{PROPiid}$]
Consider the setting of the definition of break points, Definition \ref{definbpts}.
Assume that $K_{n}$, $\tau_{K_{n}}$, $M_{n-1}$ are almost surely finite, we will prove that
\begin{eqnarray}\label{Xn}
\pr\left((X_{n+1},\Psi_{n+1},M_{n}) = (x, t,m)\vline\mbox{ } \textstyle{ \bigcap \limits_{l=1}^{n}} \{(X_{l},\Psi_{l},M_{l-1}) =(x_{l}, t_{l},m_{l-1})\}, \zeta_{t}^{O} \mbox{survives}\right)&& \nonumber\\
=\mbox{ } \pr\big( (X_{1},\Psi_{1},M_{0})= (x, t,m)|\mbox{ } \zeta_{t}^{O} \mbox{survives} \big)\hspace{10mm}
\end{eqnarray}
for all $(x_{l},t_{l},m_{l-1})$, $x_{l} \geq 1, t_{l}\in \R_{+}$, $m_{l-1}\geq0$, $l=1,\dots,n$, and hence in particular that $K_{n+1}$, $\tau_{K_{n+1}}$, $M_{n}$ are exponentially bounded.  By induction because $K_{1}$ and $\tau_{K_{1}},M_{0}$ are exponentially bounded by Proposition $\ref{PROPexpbnd}$ we have that $(\ref{Xn})$ completes the proof by Bayes's sequential formula. 

It remains to prove $(\ref{Xn})$, rewrite the conditioning event in its left hand side according to $(\ref{indXn})$ in Lemma $\ref{Xbdownii}$ and note that 
\[
\{\tau_{z_{n}} = w_{n}\} \subset \{ \zeta_{w_{n}}^{O}(z_{n})=1 \mbox{ and } \zeta_{w_{n}}^{O}(y)= -1, \mbox{ for all } y \geq z_{n}+1\},
\]
thus, applying Lemma $\ref{KeqK'}$, gives the proof by independence of the Poisson processes in disjoint parts of the graphical construction, because $(\zeta_{t+w_{n}}^{[\eta_{z_{n}}, w_{n}]}-z_{n})_{t\geq0}$ is equal in distribution to $(\zeta_{t}^{O})_{t\geq0}$ by translation invariance.
\end{proof}

\subsection{Proof of Theorem $\ref{THEthm}$}\label{S32}
We denote by $\bar{\pr}$ the probability measure induced by the construction of the process conditional on $\{\zeta_{t}^{O} \mbox{ survives}\}$ and, by $\bar{\E}$ the expectation associated to $\bar{\pr}$. Consider the setting of Theorem $\ref{THEprop}$ and let $\displaystyle{ \alpha =  \frac { \bar{\E} (r_{\tilde{\tau}_{1}})} { \bar{\E} (\tilde{\tau}_{1})}}$, $\alpha \in (0, \infty)$. %we refer to $\alpha$ as the \textit{limit of the speed}. 

\begin{proof}[proof of (i)]
Because $r_{\tilde{\tau}_{n}} = \sum\limits_{m = 1}^{n} (r_{\tilde{\tau}_{m}} - r_{\tilde{\tau}_{m-1}})$ and $\tilde{\tau}_{n} = \sum\limits_{m=1}^{n}( \tilde{\tau}_{m} - \tilde{\tau}_{m-1})$, $n\geq1$, using the strong law of large numbers twice gives us that
\begin{equation}\label{spdatbpts} 
\bar{\pr} \left( \lim_{n \rightarrow \infty} \frac{r_{\tilde{\tau}_{n}}}{\tilde{\tau}_{n}} = \alpha \right)=1,
\end{equation} 
we prove that indeed $\displaystyle{\lim_{t\rightarrow \infty}\frac{r_{t}}{t} =\alpha}$, $\bar{\pr}$ a.s..
From Theorem $\ref{THEprop}$ we have that
\begin{equation}\label{intt}
\frac{r_{\tilde{\tau}_{n}}-M_{n}}{\tilde{\tau}_{n+1}} \leq \frac{r_{t}}{t} \leq \frac{r_{\tilde{\tau}_{n+1}}} {\tilde{\tau}_{n}} , \mbox{ for all }  t\in[\tilde{\tau}_{n}, \tilde{\tau}_{n+1}), 
\end{equation}
$n\geq0$. Further, because $(M_{n})_{n\geq0}$, $M_{0}\geq0$, is a sequence of i.i.d.\ and exponentially bounded random variables we have that
\begin{equation}\label{spdM}
\bar{\pr}  \left(\lim_{n \rightarrow \infty} \frac{M_{n}}{n} =  0 \right) =1,
\end{equation}
by the 1st Borel-Cantelli lemma. Consider any $ a < \alpha$, by $(\ref{intt})$ we have that
\begin{equation}\label{eq:spdlessa}
\left\{ \frac{r_{t_{k}}}{t_{k}} < a  \mbox{ for some } t_{k} \uparrow \infty \right\}\subseteq \left\{ \limsup_{n\rightarrow \infty} \left\{\frac{r_{\tilde{\tau}_{n}}-M_{n}}{\tilde{\tau}_{n+1}} <  a \right\}\right\}, 
\end{equation}
however $\displaystyle{ \bar{\pr}\left(\limsup_{n\rightarrow \infty} \left\{\frac{r_{\tilde{\tau}_{n}}-M_{n}}{\tilde{\tau}_{n+1}} <  a \right\}\right)=0}$, to see this simply use $(\ref{spdM})$ and $(\ref{spdatbpts})$ to deduce that $\displaystyle{\lim_{n\rightarrow \infty}\frac{r_{\tilde{\tau}_{n}}-M_{n}}{\tilde{\tau}_{n+1}} = \alpha}$, $\bar{\pr}$ $\mbox{a.s.}$. By use of the upper bound in $(\ref{intt})$ and $(\ref{spdatbpts})$, we also have that for any $a > \alpha$, $\displaystyle{\bar{\pr}\left( \left\{ \frac{r_{t_{k}}}{t_{k}} > a  \mbox{ for some } t_{k} \uparrow \infty \right\}\right) =0}$, this completes the proof of \textit{(i)}. 
\end{proof}

\begin{proof}[proof of (ii)] We will prove that
\begin{equation*}
\lim_{t\rightarrow \infty}\bar{\pr} \left( \frac{r_{t} - \alpha t}{\sqrt{t}} \leq x \right) = \Phi\left(\frac{x}{\sigma^{2}}\right),
\end{equation*}
for some $\sigma^{2}>0$, $x \in\R$,  where $\Phi$ is the standard normal distribution function, $\mbox{i.e.}$,  $\displaystyle{ \Phi(y) := \frac{1}{\sqrt{2\pi}} \int_{-\infty}^{y}\exp\left(-\frac{1}{2} z^{2}\right)dz}$, $y\in \R$.

Define $N_{t}= \sup\{n: \tilde{\tau}_{n} <t\}$; evoking Lemma 2 in Kuczek \cite{K}, $\mbox{p.}$ 1330--1331, which applies due to Theorem $\ref{THEprop}$, we have that 
\begin{equation*}\label{kucinitNt}
\lim_{t\rightarrow \infty}\bar{\pr} \left( \frac{r_{N_{t}} - \alpha t}{\sqrt{t}} \leq x \right) = \Phi\left(\frac{x}{\sigma^{2}}\right),
\end{equation*}
$x \in\R$. From this, by standard association of convergence concepts, i.e.\ Slutsky's theorem, it is sufficient to show that
\begin{equation}\label{clt0}
\bar{\pr} \left(\lim_{t \rightarrow \infty } \frac{r_{t} - r_{N_{t}}}{\sqrt{t}}=0 \right)=1,
\end{equation}
and that $\sigma^{2}$ is strictly positive. Note however that, by Theorem $\ref{THEprop}$ we have that,
\begin{equation}\label{cltbouds}
\frac{ M_{\tilde{N}_{t}}} {\sqrt{t}} \leq \frac{ r_{t} - r_{N_{t}}}{\sqrt{t}} \leq \frac{ r_{\tilde{\tau}_{N_{t}+1}} - r_{\tilde{\tau}_{N_{t}}}}{\sqrt{t}} 
\end{equation}
for all $t\geq0$. 

We show that $(\ref{clt0})$ follows from $(\ref{cltbouds})$. Because $(r_{\tilde{\tau}_{n+1}} - r_{\tilde{\tau}_{n}})_{n\geq0}$, $r_{\tilde{\tau}_{1}}\geq1$, are $\mbox{i.i.d.}$ and exponentially bounded, by the 1st Borel-Cantelli lemma, and then the strong law of large numbers, we have that 
\[
\lim_{n\rightarrow \infty} \frac{ \frac{1}{\sqrt{n}}(r_{\tilde{\tau}_{n+1}} - r_{\tilde{\tau}_{n}})}{\sqrt{ \frac{\tilde{\tau}_{n}}{n}}}= 0
\] 
$\bar{\pr}$ $\mbox{a.s.}$, from the last display and emulating the argument given in $(\ref{eq:spdlessa})$ we have that 
$\displaystyle{ \lim_{t\rightarrow \infty} \frac{ r_{\tilde{\tau}_{N_{t}+1}} - r_{\tilde{\tau}_{N_{t}}}}{\sqrt{t}} =0}$, $\bar{\pr}$ $\mbox{a.s.}$.  Similarly, because $(M_{n})_{n\geq0}$, and $M_{0} \geq 0 $, are also $\mbox{i.i.d.}$ and exponentially bounded, we also have that 
$\displaystyle{\lim_{t\rightarrow \infty}\frac{ M_{\tilde{N}_{t}}}{\sqrt{t}} =0}$, $\bar{\pr} \mbox{ a.s.}$. 
%the final paragraph of 

Finally, we show that $\sigma^{2}>0$. As in the proof of Corollary 1 in Kuczek \cite{K}, because $\displaystyle{ \alpha = \frac { \bar{\E} (r_{\tilde{\tau}_{1}})} { \bar{\E} (\tilde{\tau}_{1})}}$, we need to show that 
$\bar{\E}\left( r_{\tilde{\tau}_{1}}\bar{\E}(\tilde{\tau}_{1})  - \tilde{\tau}_{1} \bar{\E} (r_{\tilde{\tau}_{1}})\right)^{2}>0$. However, because  $r_{\tilde{\tau}_{1}} \geq 1$, this follows by Chebyshev's inequality. This completes the proof of \textit{(ii)}. 

\end{proof}

For the remainder of the proof consider the graphical construction for $(\lambda, \mu)$ such that $\mu >\mu_{c}$ and $\mu\geq\lambda>0$. Consider $\zeta_{t}^{O}$, let $r_{t} =\sup I_{t}$ and $l_{t} = \inf I_{t}$ be  respectively the rightmost and leftmost infected of $I_{t}= \mathcal{I}(\zeta_{t}^{O})$. Consider also $\xi_{t}^{\Z}$, the contact process with parameter $\mu$ started from $\Z$. By Lemma $\ref{cccoup}$ we have that, for all $t\geq0$, 
\begin{equation}\label{coupHtZ}
I_{t} =  \xi_{t}^{\Z} \cap [l_{t},r_{t}] \mbox{ \textup{on} } \{I_{t} \not= \emptyset\}.
\end{equation}

\begin{proof}[proof of (iii)]
Let $\theta= \theta(\mu) >0$ be the density of the upper invariant measure, i.e., $\displaystyle{ \theta= \lim_{t\rightarrow \infty}\pr( x \in \xi_{t}^{\Z})}$. 
We prove that $\displaystyle{\lim_{t\rightarrow \infty}\frac{|I_{t}|}{t} = 2 \alpha \theta,}$ $\bar{\pr}$ a.s.. 

Considering the interval $[\max\{l_{t}, -\alpha t\}, \min\{r_{t},\alpha t\}]$, we have that for all $t\geq0$,
\begin{equation}\label{sllnineq}
\vline \mbox{ } \sum_{x=l_{t}}^{r_{t}} 1_{\{ x \in \xi_{t}^{\Z} \}} - \sum_{x =- \alpha t}^{ \alpha t} 1_{\{ x \in \xi_{t}^{\Z} \}} \mbox{ }\vline \leq | r_{t} - \alpha t| + | l_{t} + \alpha t|, \mbox{ on }\{I_{t} \not= \emptyset \},
\end{equation}
where we denote by $1_{E}$ the indicator of event $E$. However, by $(\ref{coupHtZ})$, we have that $\displaystyle{ |I_{t}|= \sum_{x=l_{t}}^{r_{t}} 1_{\{ x \in \xi_{t}^{\Z} \}}}$,  $\mbox{on } \{I_{t} \not= \emptyset \}$,  thus, because $\displaystyle{\lim_{t\rightarrow \infty} \frac{r_{t}}{t} = \alpha}$ and, by symmetry, $\displaystyle{  \lim_{t\rightarrow \infty} \frac{l_{t}}{t} =-\alpha}$, $\bar{\pr}$ a.s., the proof follows from $(\ref{sllnineq})$ because it is known that, for any $a>0$, $\displaystyle{ \lim_{t\rightarrow \infty} \frac{1}{t}\sum_{|x| \leq a t} 1_{\{x \in \xi_{t}^{\Z} \}} = 2 a \theta}$, $\pr$ a.s. (see equation (19) in the proof of Theorem 9 of Durrett and Griffeath \cite{DG}).
\end{proof}

\begin{proof}[proof of (iv)] Let $\rho = \inf\{t\geq0: I_{t} = \emptyset\}$. In the context of set valued processes, by general considerations, see Durrett \cite{D95}, it is known that weak convergence is equivalent to convergence of finite dimensional distributions and that, by inclusion-exclusion,  it is equivalent to show that for any finite set of sites $F\subset \Z$ 
\begin{equation*}\label{eq:compconv}
\lim_{t\rightarrow \infty} \pr(I_{t} \cap F = \emptyset) = \pr(\rho < \infty) + \pr(\rho = \infty)\phi_{F}(\emptyset),
\end{equation*}
where $\displaystyle{ \phi_{F}(\emptyset):= \lim_{t\rightarrow \infty} \pr(\xi_{t}^{\Z} \cap F =\emptyset)}$. By set theory we have that it is sufficient to prove 
$\displaystyle{ \lim_{t\rightarrow \infty} \pr(I_{t} \cap F = \emptyset, \rho \geq t)= \pr(\rho = \infty)\phi_{F}(\emptyset)}$,  because $\{\rho <t\} \subseteq \{I_{t}\cap F =\emptyset\}$. However, emulating the proof of the respective result for the contact process (see $\mbox{e.g.}$ Theorem 5.1 in Griffeath \cite{G}), we get $\displaystyle{\lim_{t \rightarrow \infty} \pr(\xi_{t}^{\Z} \cap F = \emptyset, \rho \geq t) = \pr(\rho =\infty)\phi_{F}(\emptyset)}$, hence, it is sufficient to prove that 
\begin{equation}\label{cccoupconseq}
\limsup_{t \rightarrow \infty} \pr(I_{t} \cap F = \emptyset, \rho \geq t) \leq  \lim_{t\rightarrow \infty}\pr(\xi_{t}^{\Z} \cap F = \emptyset, \rho \geq t),
\end{equation}
because also $\{ I_{t} \cap F = \emptyset, \rho \geq t\} \supseteq \{ \xi_{t}^{\Z} \cap F = \emptyset, \rho \geq t\}$,  by $(\ref{coupHtZ})$. 

It remains to prove $(\ref{cccoupconseq})$. By elementary calculations, 
\begin{equation*}\label{ccrhoinf}
\pr(I_{t} \cap F = \emptyset, \rho = \infty) - \pr(\xi_{t}^{\Z} \cap F = \emptyset, \rho \geq t) \leq \pr(  \xi_{t}^{\Z} \cap F \supsetneq I_{t} \cap F, \rho = \infty), 
\end{equation*}
for all $t\geq0$, where we used that by $(\ref{coupHtZ})$, $I_{t} \subset \xi_{t}^{\Z}$ for all $t\geq0$. From the last display above and set theory we have that 
$$\displaylines{\pr(I_{t} \cap F = \emptyset, \rho \geq t)-  \pr(\xi_{t}^{\Z} \cap F = \emptyset, \rho \geq t) \hfill \cr
\hfill \leq \pr( \xi_{t}^{\Z} \cap F \supsetneq I_{t} \cap F, \rho = \infty) + \pr( t<\rho <\infty),  }$$
for all $t\geq0$, however the limit as $t\rightarrow \infty$ of both terms of the right hand side in the above display is $0$, for the former this comes by $(\ref{coupHtZ})$, because $\displaystyle{\lim_{t\rightarrow \infty} r_{t} = \infty}$ and $\displaystyle{\lim_{t\rightarrow \infty} l_{t} = \infty}$, $\bar{\pr}$ $\mbox{a.s.}$, while for the latter this is obvious.

\end{proof}

\end{document}